\newtheorem{theorem}{Theorem}[section]
\newtheorem{cor}[theorem]{Corollary}
\newtheorem{rem}[theorem]{Remark}
\newtheorem{prop}[theorem]{Proposition}
\newtheorem{defi}[theorem]{Definition}
\begin{document}
\title{On approximation schemes and compactness}
\author{Asuman G. Aksoy and Jose M. Almira}
\date{}

\maketitle

\begin{abstract}
We present an overview of some results about characterization of compactness in which the concept of approximation scheme has had a role. In particular,  we present several results that were proved by the second author, jointly with Luther, a decade ago, when these authors were working on a very general theory of approximation spaces. We then introduce and show  the basic properties of a new concept of compactness, which was studied by the first author in the eighties, by using a generalized concept of approximation scheme and its associated Kolmogorov numbers, which generalizes the classical concept of compactness.
\end{abstract}

\section{Motivation}
One of the basic notions in functional analysis is compactness. Its utility  has become of fundamental importance after the appearance of Arzel\`{a}-Ascoli's Theorem \cite{arzela}, \cite{ascoli} especially pointing its use for the proof of existence results when investigating the solutions of differential equations. Indeed, a key step for the proof of convergence in many algorithms is precisely to show that a certain set is compact, and many theorems have been produced to characterize compactness of subsets of the numerous function spaces and operator spaces that appear in functional analysis. The compactness of operators was also a main ingredient for the study of the solutions of integral equations, and was indeed introduced by Hilbert in his studies of the equations of Mathematical Physics.  In particular, Hilbert and his student Schmidth proved a very nice decomposition formula for all self-adjoint compact operator $T:H\to H$, where $H$ is any separable Hilbert space: the spectral decomposition theorem. This theory was soon investigated and amplified to a beautiful set of results which we call nowadays Riesz theory (or Riesz-Schauder Theory) and is devoted to the study of operators $S:X\to X$ (where $X$ denotes any complex Banach space) that can be expressed as $S=\lambda I_{X}-T$ with $\lambda\neq 0$ (an scalar) and $T:X\to X$, a compact operator. In such study, the spectral properties of the operator $T$ are essential and, in connection with these properties, it was soon discovered that some entropy and approximation quantities were of great importance (see, e.g., \cite{Carl_Stephani} for a detailed study of this connection). Compactness has also been a fundamental concept for the development of other parts of Mathematical Analysis, such as Fixed Point Theory or Approximation Theory. Concretely, Brouwer's fixed point theorem \cite{Br} asserts that every compact convex set $K$ in $\mathbb{R}^n$ is a fixed point space, that is, if $f:K\to K$ is continuous, then $f(x)=x$ for some $x\in K$ (see \cite[p. 25]{Mat} for a nice easy demonstration). On the other hand, Schauder's fixed point theorem \cite{Sh}, which has numerous
applications in Mathematical Analysis, asserts that every convex set in a normed linear space is a fixed point space for compact maps (see also \cite{Bon}). Among the results equivalent to Brouwer's fixed point theorem, the theorem of Knaster, Kuratowski and Mazurkiewicz (in short, KKM) \cite{KKM} occupies a special place. Ky Fan, using KKM maps, was able to prove a best approximation theorem \cite{KF}. Later on, this concept was generalized by  Khamsi to metric space setting by demonstrating a result which can be seen as an extension of Brouwer and Schauder's fixed point theorems (see \cite{Kh}). Finally, just to include  in this section some results related to Approximation Theory, we would like to stand up that compactness of natural embeddings $Y \hookrightarrow X$ is, in fact, the main reason because, in many classical contexts, we can prove that approximation errors (with respect to arbitrary approximation schemes) and Fourier coefficients of functions that belong to the space $Y$, decay to zero with a certain prescribed behavior. This was recently proved by Almira and Oikhberg \cite{almira_oikhberg_2} and by Almira \cite{almira_fourier_coeff}.

In this paper, we survey some results about the characterization of compactness in which the concept of approximation scheme has had a role. Concretely, in Section 2 we present several results that were proved by the second author, jointly with Luther, a decade ago, when these authors were working on a very general theory of approximation spaces \cite{almiraluther2}, \cite{almiraluther1} (see also \cite{fugarolas}) and, in Section 3, we introduce and show  the basic properties of a new concept of compactness, which was studied by the first author in the eighties  \cite{Ak0}, \cite{Ak1}, \cite{Ak2}, \cite{AkNa}, by using a generalized concept of approximation scheme and its associated Kolmogorov numbers, which generalizes the classical concept of compactness.

\section{Approximation schemes, approximation spaces and compactness}
\subsection{Preliminaries}
\begin{defi} \label{ap_sch} Given $(X,\|\cdot\|)$  a quasi-Banach space, and
$A_0\subset A_1\subset\ldots \subset A_n\subset\ldots \subset X$
an infinite chain of subsets of $X$, where all inclusions are
strict,  we say that $(X,\{A_n\})$ is an {\it approximation scheme} (or that $(A_n)$ is an approximation scheme in $X$) if:
\begin{itemize}
\item[$(A1)$] there exists a map $K:\mathbb{N}\to\mathbb{N}$ such that $K(n)\geq n$ and $A_n+A_n\subseteq A_{K(n)}$ for all $n\in\mathbb{N}$,

\item[$(A2)$] $\lambda A_n\subset A_n$ for all $n\in\mathbb{N}$ and all scalars $\lambda$,

\item[$(A3)$] $\bigcup_{n\in\mathbb{N}}A_n$ is a dense subset of $X$.
\end{itemize}
We say that the approximation scheme $(X,\{A_n\})$ is nontrivial if $A_n=\overline{A_n}^X\subsetneq A_{n+1}$ for all $n$, and we say that it is linear if $A_n$ is a vector subspace of $X$ for all $n$.
\end{defi}
Approximation schemes were introduced in Banach space theory by Butzer and Scherer in 1968 \cite{butzer_scherer} and, independently,  by Y. Brudnyi and N. Kruglyak under the name of ``approximation families'' in 1978 \cite{brukru}. They were
popularized by Pietsch in his 1981 seminal paper \cite{Pie}, in which he introduced the approximation spaces
$$A_p^r(X,{A_n})=\{x\in X: \|x\|_{A_p^r}=\|\{E(x,A_n)\}_{n=0}^\infty\|_{\ell_{p,r}}<\infty\},$$ where $$\ell_{p,r}=\{\{a_n\}\in\ell^\infty: \|\{a_n\}\|_{p,r}=\left[\sum_{n=1}^\infty n^{rp-1}(a_n^*)^p\right]^\frac{1}{p}<\infty\}$$ denotes the so-called Lorentz sequence space (in particular, $\{a_n^*\}$ is the non-increasing rearrangement of $\{a_n\}$), $(X,\|\cdot\|_X)$ is a quasi-Banach space, and $E(x,A_n)=\inf_{a\in A_n}\|x-a\|_X$.

There were two main motivations for Pietsch's study of approximation spaces. On the one hand, the spaces $A_p^r(X,{A_n})$ form a scale which allows a natural interpretation of the so called central theorems in approximation theory as the appropriate tool for the classification of functions and operators in terms of their smoothness (compactness, respectively) properties, which crystallize with the property of  membership to one of these spaces (see, for example,  \cite{almirabjma}, \cite{devorenonlinear}, \cite{devore}, \cite{PieID}). On the other hand, he also detected a very nice  parallelism between the theories of approximation spaces and interpolation spaces. In particular, he proved embedding, reiteration and representation results for his approximation spaces.

Simultaneously and also independently, Ti\c{t}a \cite{tita_col2} studied, from 1971 on, for the case of approximation of linear operators by finite rank operators,  a similar concept, based on the use of symmetric norming functions $\Phi$ %(see \cite{salinas} for the definition)
and the sequence spaces defined by them, $S_{\Phi}=\{\{a_n\}:\exists \lim_{n\to\infty}\Phi(a_1^*,a_2^*,\cdots,a_n^*,0,0,\cdots)\}$ and, later on, Almira and Luther \cite{almiraluther2}, \cite{almiraluther1} developed a  theory for generalized approximation spaces via the use of general sequence spaces $S$ (that they named ``admissible sequence spaces'') and  defined  approximation spaces as $$A(X,S,\{A_n\})=\{x\in X: \|x\|_{A(X,S)}=\|\{E(x,A_n)\}\|_S<\infty\}.$$ Furthermore, this theory, which also includes the reiteration and representation theorems, was developed by the authors without using any result from interpolation theory. Admissibility of the sequence space $S$ is just a technical imposition that allows to prove that $\|x\|_{A(X,S)}=\|\{E(x,A_n)\}\|_S$ defines a quasi-norm. This property is automatically satisfied by the sequence spaces $S$ which contain all finite null sequences and satisfy that, if
$\{b_n\}\in S$ and $|a_n|\leq |b_n|$ for all $n$, then $\{a_n\}\in S$ and $\|\{a_n\}\|_{S}\leq \|\{b_n\}\|_S$; if $K(n)=n$ (see \cite[Definition 3.2]{almiraluther2}). Other papers with a similar spirit of generality have been written by Aksoy \cite{Ak0}, \cite{Ak1}, \cite{Ak2}, \cite{AkNa}, Ti\c{t}a \cite{tita_cluj_99} and Pustylnik \cite{pustylnik}, \cite{pustylnik2}. Finally, a few other important references for people interested on approximation spaces and/or approximation schemes are \cite{almira_oikhberg},  \cite{almira_oikhberg_2}, \cite{cobos}, \cite{cobos_milman}, \cite{cobos_resina}, \cite{feher_g}, \cite{Oik}, \cite{peetre_sparr}, \cite{tita_anal}, \cite{tita_col}, \cite{tita_col2} and \cite{tita_studia}.
It is important to remark that, due to the centrality of the concept of approximation scheme in approximation theory,  the idea of defining approximation spaces is a quite natural one. Unfortunately, this has had the negative effect that many unrelated people has thought on the same things at different places and different times, and some papers on this subject partially overlap.

Along this paper we will assume that all spaces appearing are normed, although many of the results presented here also hold true in the  quasi-normed setting.

\subsection{Characterization of compactness with boundedly compact approximation schemes and the Arzel\`{a}-Ascoli Theorem}

A first characterization of compactness in complete metric spaces was given by Hausdorff, who proved that $M$ is relatively compact in the complete metric space $(X,d)$ if and only if for every $\varepsilon>0$ there exists a finite $\varepsilon$-net for $M$  (i.e., a finite set of points $\{x_k\}_{k=1}^s\subseteq X$ such that  $M\subseteq \bigcup_{k=1}^NB_d(x_k,\varepsilon)$, where $B_d(x,t)=\{y\in X:d(x,y)\leq t\}$).  This result can be reformulated as a characterization of compactness with the aid of approximation schemes as follows.

%A natural characterization of compactness, which rest on the characterization of finite dimensional Banach spaces as those Banach spaces whose unit ball is compact, is the following one:

\begin{theorem} \label{timan} Assume that $(X,\{A_n\})$ is an approximation scheme with $ A_n $ boundedly compact  for all $n\in\mathbb{N}$, and let $M\subseteq X$. Then the following are equivalent claims:
\begin{itemize}
\item[$(i)$] $M$ is a relatively compact subset of $X$
\item[$(ii)$] $M$ is a bounded subset of $X$ and $\lim_{n\to\infty}E(M,A_n)=0$.
\end{itemize}
Furthermore, the implication  $(i)\Rightarrow (ii)$ holds true for arbitrary approximation schemes $\{A_n\}$.
\end{theorem}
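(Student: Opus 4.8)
The plan is to deduce both implications from Hausdorff's criterion: since $X$ is complete, a subset is relatively compact if and only if it is totally bounded, i.e.\ admits a finite $\varepsilon$-net for every $\varepsilon>0$. Two elementary facts will be used throughout: $E(x,A_n)$ is non-increasing in $n$ (because $A_n\subseteq A_{n+1}$), hence $E(M,A_n)=\sup_{x\in M}E(x,A_n)$ is non-increasing as well; and a subset of a totally bounded set is totally bounded.

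For $(i)\Rightarrow(ii)$, which I will establish for an arbitrary approximation scheme: if $M$ is relatively compact, then $\overline{M}$ is compact, hence bounded, so $M$ is bounded and totally bounded. Fix $\varepsilon>0$ and pick a finite $\varepsilon$-net $\{x_1,\dots,x_s\}\subseteq X$ for $M$. By $(A3)$ the union $\bigcup_k A_k$ is dense in $X$, so there are indices $m_j$ and points $a_j\in A_{m_j}$ with $\|x_j-a_j\|<\varepsilon$; putting $N=\max_j m_j$ and using the chain $A_0\subseteq A_1\subseteq\cdots$ of Definition~\ref{ap_sch}, all the $a_j$ lie in $A_N$. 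Given $x\in M$, choose $j$ with $\|x-x_j\|\le\varepsilon$; then $E(x,A_N)\le\|x-a_j\|<2\varepsilon$, whence $E(M,A_N)\le 2\varepsilon$ and, by monotonicity, $E(M,A_n)\le 2\varepsilon$ for every $n\ge N$. Since $\varepsilon>0$ was arbitrary, $\lim_{n\to\infty}E(M,A_n)=0$.

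For $(ii)\Rightarrow(i)$ the bounded compactness of the $A_n$ enters. Assume $M\subseteq\overline{B}(0,R)$ and $E(M,A_n)\to 0$, and fix $\varepsilon>0$. Choose $n$ with $E(M,A_n)<\varepsilon/3$ and, for each $x\in M$, a point $a_x\in A_n$ with $\|x-a_x\|<\varepsilon/3$. The set $S=\{a_x:x\in M\}$ is a bounded subset of $A_n$, since it is contained in $\overline{B}(0,R+\varepsilon/3)$; as $A_n$ is boundedly compact, $S$ is relatively compact, hence totally bounded, so it admits a finite $\varepsilon/3$-net $\{a_1,\dots,a_s\}$. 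This set is then an $\varepsilon$-net for $M$: for $x\in M$ there is $j$ with $\|x-a_j\|\le\|x-a_x\|+\|a_x-a_j\|<\varepsilon/3+\varepsilon/3<\varepsilon$. Thus $M$ is totally bounded, and therefore relatively compact.

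The only genuinely substantive step is the last one: the passage from ``$M$ is uniformly close to a bounded portion of $A_n$'' to ``$M$ is totally bounded'' is exactly where bounded compactness cannot be dispensed with. If some $A_n$ contains an infinite-dimensional bounded subset, then $A_n\cap\overline{B}(0,R+\varepsilon)$ need not be totally bounded and the conclusion fails (for instance $M$ could be an orthonormal sequence already contained in $A_1$). Everything else is routine bookkeeping with $\varepsilon$'s, the monotonicity of $E(M,A_n)$ in $n$, and the collapsing of the finite net into a single $A_N$ via the nestedness of the scheme.
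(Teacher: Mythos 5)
Your proof is correct and follows essentially the same route as the paper: Hausdorff's total-boundedness criterion in both directions, transferring a finite $\varepsilon$-net for $M$ into a single $A_N$ via density and nestedness for $(i)\Rightarrow(ii)$, and extracting a finite net from a bounded (hence relatively compact) portion of $A_n$ for $(ii)\Rightarrow(i)$. Your version is in fact slightly cleaner in the forward direction, where you avoid the paper's informal triangle-type inequality for $E(\cdot,A_n)$ by approximating the net points directly.
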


\begin{proof}
$(i)\Rightarrow (ii)$ Assume that $M\subseteq X$ is relatively compact. Then $M$ is bounded in $X$ since $\overline{M}^X$ is bounded (compactness implies boundedness). We must show that $\lim_{n\to\infty}E(M,A_n)=0$. Take $\varepsilon >0$ and let $\{x_1,\cdots,x_N\}\subseteq X$ be an $\varepsilon$-net for $M$. Then, given $x\in M$, $E(x,A_n)\leq E(x-x_k,A_n)+E(x_k,A_n)\leq \varepsilon +\max_{k=1,\cdots,N}E(x_k,A_n) \leq 2\varepsilon$ for $n\geq N_0(\varepsilon)$,  since
$\lim_{n\to\infty}\max_{k=1,\cdots,N}E(x_k,A_n)=0$. Note that we have used nothing about $\{A_n\}$ but the fact that $\bigcup_{n\in\mathbb{N}}A_n$ is a dense subset of $X$.

$(ii)\Rightarrow (i)$ Let $\varepsilon>0$ be an arbitrary positive constant. By hypothesis, there exists $N_0=N_0(\varepsilon)>0$ such that $E(M,A_{N_0})<\varepsilon/4$. In particular, every $x\in M$ admits a decomposition $x=a(x)+y(x)$ with $a(x)\in A_{N_0}$ and $\|y(x)\|=\|x-a(x)\|\leq \varepsilon/2$.  Now, boundedness of $M$ implies that there exists a constant $C>\varepsilon$ such that $M\subseteq CU_X$, so that $\|a(x)\|\leq \|x\|+\|y(x)\|\leq C+\varepsilon/2\leq 2C$.

Let
$\{b_1,b_2,\cdots,b_s\}$ be a $\varepsilon/2$-net in $A_{N_0}\cap 2CU_X$, which is a compact set since $A_{N_0}$ is boundedly compact.  Given $x\in M$ there exists $i\leq s$ such that $\|a(x)-b_i\|\leq \varepsilon/2$, so that
\[
\|x-b_i\|\leq \|x-a(x)\|+\|a(x)-b_i\|\leq \varepsilon,
\]
which proves that $\{b_1,\cdots,b_s\}$ is a finite $\varepsilon$-net for $M$. Hausdorff's theorem guarantees that $M$ is relatively compact in $X$.
\end{proof}

\begin{cor} \label{A_nCompatness} Assume that $(X,\{A_n\})$ is an approximation scheme with $ A_n $ boundedly compact  for all $n\in\mathbb{N}$.  A set $M\subseteq X$ satisfies $\lim_{n\to\infty}E(M,A_n)=0$ if and only if there exists $M'$, a relatively compact  subset of $X$, and a natural number $N\in\mathbb{N}$  such that $M\subseteq A_N+M'$ is satisfied.
\end{cor}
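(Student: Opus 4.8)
The plan is to derive both implications of the Corollary from Theorem \ref{timan}, using the quasi-additivity axiom $(A1)$ to transfer approximation estimates between a set and its ``remainder'' after subtracting an element of a \emph{fixed} level $A_N$. Throughout, $E(M,A_n)$ denotes $\sup_{x\in M}E(x,A_n)$, as in the proof of Theorem \ref{timan}.

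For the implication ``$M\subseteq A_N+M'$ with $M'$ relatively compact $\Rightarrow$ $\lim_n E(M,A_n)=0$'', I would fix, for each $x\in M$, a decomposition $x=a_x+m_x$ with $a_x\in A_N$ and $m_x\in M'$. The implication $(i)\Rightarrow(ii)$ of Theorem \ref{timan} (valid for arbitrary approximation schemes) gives $\lim_n E(M',A_n)=0$. Now for $n\geq N$ one has $a_x\in A_N\subseteq A_n$, so $a_x+c\in A_n+A_n\subseteq A_{K(n)}$ for every $c\in A_n$ by $(A1)$; hence $E(x,A_{K(n)})\leq \|x-(a_x+c)\|=\|m_x-c\|$, and taking the infimum over $c\in A_n$ and then the supremum over $x\in M$ gives $E(M,A_{K(n)})\leq E(M',A_n)$. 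Since $K(n)\geq n$ and the chain $\{A_m\}$ is increasing, this forces $E(M,A_m)\to 0$.

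For the converse, assuming $\lim_n E(M,A_n)=0$ I would pick $N$ with $E(M,A_N)\leq 1$, choose for each $x\in M$ a near-best approximant $a_x\in A_N$ with $\|x-a_x\|\leq E(x,A_N)+1\leq 2$, and set $M':=\{x-a_x:x\in M\}$. Then $M\subseteq A_N+M'$ trivially and $M'\subseteq 2\,U_X$ is bounded, so by the implication $(ii)\Rightarrow(i)$ of Theorem \ref{timan} — and this is exactly where bounded compactness of the $A_n$ is used — it suffices to show $E(M',A_m)\to 0$. The key computation: for $n\geq N$, $x\in M$ and $c\in A_n$, property $(A2)$ gives $-a_x\in A_N\subseteq A_n$, so $c-a_x\in A_n+A_n\subseteq A_{K(n)}$ by $(A1)$, whence $E(x-a_x,A_{K(n)})\leq \|(x-a_x)-(c-a_x)\|=\|x-c\|$; taking the infimum over $c$ and the supremum over $x$ yields $E(M',A_{K(n)})\leq E(M,A_n)$, and monotonicity together with $K(n)\geq n$ finishes the job.

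The hard part is this last implication: the obvious decomposition $x=a_x+(x-a_x)$ only manifestly produces a \emph{bounded} remainder set, and the real content is that it can be taken \emph{relatively compact}. The point is that drawing all approximants $a_x$ from one fixed level $A_N$ forces the remainder errors $E(M',A_m)$ to be dominated — up to the reindexing $m\mapsto K(m)$ — by the original errors $E(M,A_m)$, so Theorem \ref{timan} applies. Beyond correctly threading the index shifts coming from $K(\cdot)$ and checking the boundedness of $M'$, I expect no serious obstacle.
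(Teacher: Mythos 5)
Your proposal is correct and follows essentially the same route as the paper's own proof: both directions rest on transferring the approximation errors through the index shift $n\mapsto K(n)$ via axiom $(A1)$ (yielding $E(M,A_{K(n)})\leq E(M',A_n)$ and $E(M',A_{K(n)})\leq E(M,A_n)$ respectively) and then invoking Theorem \ref{timan}. The only difference is cosmetic: you use a single near-best-approximant construction of $M'$ where the paper splits into the cases $M$ bounded (taking $M'=M$) and $M$ unbounded, a case distinction that is indeed unnecessary.
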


\begin{proof} Assume that $M\subseteq A_N+M'$ with $M'$ relatively compact  in $X$. Then Theorem \ref{timan} implies that $\{E(M',A_n)\}\searrow 0$. Take $x\in M$ and $n\in\mathbb{N}$, $n\geq N$. Then there exists $a\in A_N$, $y\in M'$ such that $x=a+y$ and
\begin{eqnarray*}
E(x,A_{K(n)}) &=& E(a+y,A_{K(n)}) \\
&\leq& E(a,A_n)+E(y,A_n) \\
&=& E(y,A_n)\leq E(M',A_n),
\end{eqnarray*}
so that $E(M,A_{K(n)})\leq E(M',A_n)$ for all $n\geq N$, and $\{E(M,A_n)\}\searrow 0$.

Let us now assume that  $\{E(M,A_n)\}\searrow 0$. If $M$ is a bounded subset of $X$ then Theorem \ref{timan} implies that $M$ is relatively compact, so that we can take $M'=M$ and $N=0$. On the other hand, if $M$ is unbounded, then we can take $N\in\mathbb{N}$ such that $E(M,A_N)\leq 1/2$ and define $M'=\{y\in U_X: \text{ exists } x\in M \text{ and } a\in A_N \text{ such that } y=x-a\}$. $M'$ is obviously bounded and, if $y=x-a\in M'$ with $a\in A_N$, $x\in M$, then, for each $n\geq N$,
\begin{eqnarray*}
E(y,A_{K(n)}) &=& E(x-a,A_{K(n)}) \\
&\leq& E(a,A_n)+E(x,A_n) \\
&=& E(x,A_n)\leq E(M,A_n),
\end{eqnarray*}
which proves that $E(M',A_{K(n)})\leq E(M,A_n)$ for all $n\geq N$. Thus $\{E(M',A_n)\}\searrow 0$ and Theorem \ref{timan} implies that $M'$ is a relatively compact subset of $X$.
\end{proof}

\begin{cor}[Arzel\`{a}-Ascoli]  A set $M\subseteq C[a,b]$ is relatively compact in $C[a,b]$ if and only if it is uniformly bounded and equicontinuous.
\end{cor}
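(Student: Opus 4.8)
The plan is to apply Theorem \ref{timan} to the approximation scheme $(C[a,b],\{\mathcal{P}_n\})$, where $\mathcal{P}_n$ denotes the space of algebraic polynomials of degree at most $n$ on $[a,b]$. First I would check that this is an admissible choice. The inclusions $\mathcal{P}_0\subsetneq\mathcal{P}_1\subsetneq\cdots$ are strict; $\mathcal{P}_n+\mathcal{P}_n=\mathcal{P}_n$, so $(A1)$ holds with $K(n)=n$; $\lambda\mathcal{P}_n\subseteq\mathcal{P}_n$ gives $(A2)$; and the classical Weierstrass theorem says $\bigcup_n\mathcal{P}_n$ is dense in $C[a,b]$, which is $(A3)$. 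Moreover each $\mathcal{P}_n$ is a finite-dimensional subspace of $C[a,b]$, hence closed and with all of its closed bounded subsets compact, i.e. boundedly compact. Thus Theorem \ref{timan} applies and tells us that $M\subseteq C[a,b]$ is relatively compact if and only if $M$ is bounded and $E(M,\mathcal{P}_n)\to 0$ as $n\to\infty$. Since boundedness in the sup-norm is exactly uniform boundedness, the proof reduces to showing that, for a uniformly bounded $M$, the condition $E(M,\mathcal{P}_n)\to 0$ is equivalent to equicontinuity of $M$.

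For the direction ``$M$ equicontinuous $\Rightarrow E(M,\mathcal{P}_n)\to 0$'' I would invoke a quantitative form of the Weierstrass theorem: after an affine change of variable to $[0,1]$, the Bernstein polynomials $B_nf$ obey a classical estimate of the shape $\|f-B_nf\|_\infty\le c\,\omega(f;n^{-1/2})$, where $\omega(f;\cdot)$ is the modulus of continuity and $c$ is an absolute constant (alternatively one may quote Jackson's theorem, $E(f,\mathcal{P}_n)\le c'\,\omega(f;(b-a)/n)$). Hence $E(f,\mathcal{P}_n)\le c\,\omega(f;n^{-1/2})$ for every $f$, and equicontinuity of $M$ — which is precisely the statement that $\sup_{f\in M}\omega(f;\delta)\to 0$ as $\delta\to 0^{+}$ — forces $E(M,\mathcal{P}_n)\to 0$.

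For the converse, ``$E(M,\mathcal{P}_n)\to 0 \Rightarrow M$ equicontinuous'', fix $\varepsilon>0$, choose $n$ with $E(M,\mathcal{P}_n)<\varepsilon/3$, and for each $f\in M$ pick $p_f\in\mathcal{P}_n$ with $\|f-p_f\|_\infty<\varepsilon/3$; then $\|p_f\|_\infty\le C+\varepsilon/3$, where $C$ bounds $M$. Because $\mathcal{P}_n$ is finite-dimensional, the seminorm $p\mapsto\|p'\|_\infty$ is dominated by $\|\cdot\|_\infty$, so there is a constant $M_n$ with $\|p'\|_\infty\le M_n\|p\|_\infty$ for all $p\in\mathcal{P}_n$; consequently the $p_f$ are uniformly Lipschitz with constant $M_n(C+\varepsilon/3)$, and from $|f(x)-f(y)|\le|f(x)-p_f(x)|+|p_f(x)-p_f(y)|+|p_f(y)-f(y)|$ one reads off equicontinuity of $M$ with $\delta=\varepsilon/(3M_n(C+\varepsilon/3))$. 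Combining the two directions with Theorem \ref{timan} completes the proof. The main obstacle is the first equivalence above: it is exactly the point where one must import a quantitative direct theorem of approximation theory (Bernstein's estimate or Jackson's theorem) to turn equicontinuity into uniform polynomial approximability; everything else follows either from finite-dimensionality of $\mathcal{P}_n$ or is already contained in Theorem \ref{timan}.
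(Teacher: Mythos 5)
Your proof is correct and follows essentially the same route as the paper: the scheme $(C[a,b],\{\Pi_n\})$, Theorem \ref{timan} to reduce compactness to boundedness plus $E(M,\Pi_n)\to 0$, Jackson's inequality for the direct implication, and finite-dimensionality of $\Pi_n$ to get equicontinuity of the approximants in the converse. The only (cosmetic) difference is that you control $\omega(p_f;\cdot)$ via the Markov-type bound $\|p'\|_\infty\le M_n\|p\|_\infty$, whereas the paper bounds the coefficients of $p^*$ in the monomial basis and sums the moduli of continuity of the monomials; both are the same ``all norms on $\Pi_n$ are equivalent'' argument.
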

%Let $K$ be a compact metric space and let $C(K)$ denote the space of continuous functions $f:K\to \mathbb{R}$ with the uniform norm.
%$C(K)$ is separable, so that it admits a linear approximation scheme $\{A_n\}$ with $\dim A_n<\infty$ for all $n$.

\begin{proof}
Let us consider the approximation scheme  $(C[a,b],\{\Pi_n\})$, where $\Pi_n$ denotes the space of (algebraic) polynomials of degree $\leq n$ and let us assume that $M$ is relatively compact in $C[a,b]$. Then, Theorem \ref{comp_dim_fin} implies that $M$  is a bounded subset of $C[a,b]$ (i.e., $M$ is uniformly bounded, so that there exists $C>0$ such that $\|f\|_{C[a,b]}\leq C$ for all $f\in M$) and $\{E(M,\Pi_n)\}\searrow 0$. Let us show that $M$ is equicontinuous.

Given $\varepsilon>0$ (without loss of generality we assume $\varepsilon<C$), there exists $N\in\mathbb{N}$ such that $E(M,\Pi_N)<\varepsilon/8$.  Furthermore, for all $t,s\in [a,b]$ and all $f\in M$ we have
\[
|f(t)-f(s)|\leq |f(t)-p(t)|+|p(t)-p(s)|+|p(s)-f(s)| \text{ for all } p\in \Pi_N.
\]
Hence, if we take $p=p^*\in \Pi_N$ such that $\|f-p^*\|_{C[a,b]}\leq 2E(f,\Pi_N)$, then
\[
|f(t)-f(s)|\leq 4E(f,\Pi_N)|+|p^*(t)-p^*(s)|\leq \varepsilon/2+w(p^*,|t-s|),
\]
where $w(h,\delta)=\sup_{|t-s|\leq \delta}|h(t)-h(s)|$ denotes the modulus of continuity of the function $h$. Now, if $p(t)=a_0+a_1t+\cdots+a_Nt^N\in\Pi_N$, then both $\|p\|_{0}=\max_{0\leq k\leq N} |a_k| $ and $\|p\|_1=\|p\|_{C[a,b]}$ define a norm over the finite dimensional space $\Pi_N$, so that they are equivalent norms.  On the other hand, $M$ being bounded, the norm of $p^*$ must be controlled by a constant $K>0$ (since $\|p^*\|\leq \|p^*-f\|+\|f\|\leq \varepsilon/4+C\leq 2C=K$). This implies that we can assume $\max_{0\leq k\leq N}|a_k|\leq K^*$ for a certain constant $K^*>0$ and hence
\[
w(p^*,|t-s|)\leq K^*\sum_{k=0}^{N}w(\phi_k,|t-s|); \text{ where  }\phi_k(x)=x^k, \ k=0,1,\cdots,N.
\]
(since $w(ah_1+bh_2,\delta)\leq
\max\{|a|,|b|\}(w(h_1,\delta)+w(h_2,\delta))$ for all scalars $a,b$ and functions $h_1,h_2$, and $p^*=\sum_{k=0}^N\alpha_k\phi_k$). In particular, we can choose $\delta=\delta(\varepsilon)>0$ such that $|t-s|\leq \delta $ implies $\max_{0\leq k\leq N}w(\phi_k,|t-s|)\leq \frac{\varepsilon}{2K^*(N+1)}$. This shows that $w(f,\delta)\leq \varepsilon$ for all $f\in M$, which is what we wanted to prove. To prove the other implication we can use Theorem \ref{comp_dim_fin} with $A_n=\Pi_n$ and the well known Jackson's inequality for algebraic approximation $E(f,\Pi_n)\leq C w(f,\frac{1}{n+1})$, $n=0,1,\cdots$.
\end{proof}

 In this section of the paper, we will concentrate our attention most of the time on linear approximation schemes defined over Banach spaces $X$, since they are enough for the applications we mention explicitly here. In such a case it is known that all sequence spaces
$\ell^q(\beta)=\{\{a_n\}\subset \mathbb{R}: \|\{a_n\}\|_{\ell^q(\beta)}=(\sum_{n=0}^{\infty}b_n|a_n|^q)^{\frac{1}{q}}<\infty\}$ are admissible, so that, when dealing with these spaces we do not worry about the weights $\beta=\{b_n\}\subset [0,\infty)$. Of course, if the approximation scheme is nonlinear and the space  $\ell^q(\beta)$ is not admissible for this approximation scheme, we still can talk about the set $A(X,\{A_n\},\ell^q(\beta))$ and we will say that $M$ is bounded in $A(X,\{A_n\},\ell^q(\beta))$ whenever $\sup_{f\in M}\|\{E(f,A_n\}\|_{\ell^q(\beta)}<\infty$.

\begin{theorem} \label{comp_dim_fin}
Assume that $(X,\{A_n\})$ is an approximation scheme with $ A_n $ boundedly compact  for all $n\in\mathbb{N}$. If  $q\in [1,\infty]$ and $M\subseteq X$, then the following are equivalent statements:
\begin{itemize}
\item[$(i)$] $M$ is a relatively compact subset of $X$.
\item[$(ii)$] There exists $\beta=\{b_n\}_{n=0}^\infty$ a sequence of nonnegative real numbers such that $\|\beta\|_{\ell^q}=\infty$ and  $M$ is a bounded subset of $A(X,\{A_n\},\ell^{q}(\beta))$.
\end{itemize}
\end{theorem}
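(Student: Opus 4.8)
The plan is to deduce both implications from Theorem~\ref{timan}: once one knows that, for boundedly compact $A_n$, relative compactness of $M$ is equivalent to ``$M$ bounded in $X$ and $\lim_{n\to\infty}E(M,A_n)=0$'', it only remains to relate the condition $\lim_n E(M,A_n)=0$ with membership of $M$ in some space $A(X,\{A_n\},\ell^q(\beta))$ with $\|\beta\|_{\ell^q}=\infty$. That relation is a soft statement about the non-increasing sequence $\varepsilon_n:=E(M,A_n)$, and the whole point is that, because $\varepsilon_n\searrow 0$, one has a lot of freedom in choosing the weights $\beta$.

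For $(i)\Rightarrow(ii)$ I would argue as follows. By Theorem~\ref{timan} (its ``furthermore'' part, valid for arbitrary schemes), $M$ is bounded in $X$ and $\varepsilon_n:=E(M,A_n)$ is non-increasing with $\varepsilon_n\to 0$; in particular $0\le\varepsilon_n\le\varepsilon_0=\sup_{f\in M}E(f,A_0)<\infty$. Assume first $q<\infty$. Choose indices $n_0<n_1<\cdots$ with $\varepsilon_{n_k}\le 2^{-k}$ (possible because $\varepsilon_n\to 0$), and set $b_{n_k}:=2^{k(1-1/q)}$ and $b_n:=2^{-n}$ at every other index. Then $\sum_n b_n^q\ge\sum_k b_{n_k}^q=\sum_k 2^{k(q-1)}=\infty$, so $\|\beta\|_{\ell^q}=\infty$; and for every $f\in M$, using $E(f,A_n)\le\varepsilon_n$,
\[
\sum_n b_nE(f,A_n)^q\ \le\ \varepsilon_0^q\sum_n 2^{-n}\ +\ \sum_k 2^{k(1-1/q)}\varepsilon_{n_k}^q\ \le\ 2\varepsilon_0^q+\sum_k 2^{-k(q+1/q-1)},
\]
and the last series converges since $q+\tfrac1q\ge 2$ by the AM--GM inequality. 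Hence $\sup_{f\in M}\|\{E(f,A_n)\}\|_{\ell^q(\beta)}<\infty$, i.e.\ $M$ is a bounded subset of $A(X,\{A_n\},\ell^q(\beta))$. If $q=\infty$ one takes simply $b_n:=\varepsilon_n^{-1}$ when $\varepsilon_n>0$ and $b_n:=n+1$ when $\varepsilon_n=0$: then $\sup_n b_n=\infty$, while $b_nE(f,A_n)\le b_n\varepsilon_n\le 1$ for all $f\in M$ and all $n$.

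For $(ii)\Rightarrow(i)$, set $C:=\sup_{f\in M}\|\{E(f,A_n)\}\|_{\ell^q(\beta)}<\infty$ and first show $\lim_nE(M,A_n)=0$. When $q<\infty$, monotonicity of $n\mapsto E(f,A_n)$ gives $E(f,A_N)^q\sum_{n=0}^N b_n\le\sum_{n=0}^N b_nE(f,A_n)^q\le C^q$; since $\|\beta\|_{\ell^q}=\infty$ means $\sum_n b_n^q=\infty$, and this forces $\sum_n b_n=\infty$ (if $\sum_n b_n<\infty$ then $b_n\le 1$ eventually, whence $b_n^q\le b_n$ and $\sum_n b_n^q<\infty$), one gets $E(M,A_N)^q\le C^q/\sum_{n=0}^N b_n\to 0$. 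When $q=\infty$, $b_nE(M,A_n)\le C$ and $\sup_n b_n=\infty$ force $E(M,A_{n_j})\to 0$ along a subsequence, hence $E(M,A_n)\to 0$ by monotonicity. Next, boundedness of $M$ in $A(X,\{A_n\},\ell^q(\beta))$ yields boundedness of $M$ in $X$, since $A(X,\{A_n\},\ell^q(\beta))$ embeds continuously into $X$ (concretely, in the usual normalization $A_0=\{0\}$ one has $\|f\|_X=E(f,A_0)\le b_0^{-1/q}C$ for all $f\in M$ when $q<\infty$, and similarly for $q=\infty$). With $M$ bounded in $X$ and $\lim_nE(M,A_n)=0$, Theorem~\ref{timan} --- and this is the only place bounded compactness of the $A_n$ is used --- gives that $M$ is relatively compact in $X$.

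The delicate point is the choice of weights in $(i)\Rightarrow(ii)$: the naive guess $b_n\approx\varepsilon_n^{-q}c_n$ with $(c_n)$ summable does keep $\{E(f,A_n)\}$ bounded in $\ell^q(\beta)$, but when $\varepsilon_n\to 0$ slowly the factor $\varepsilon_n^{-q}$ grows too slowly for $\sum_n b_n^q$ to diverge; the remedy is to observe that $\varepsilon_n$ decays to $0$ along some sparse subsequence $(n_k)$ as fast as one likes and to concentrate the weights on $(n_k)$, which is exactly what makes $\|\beta\|_{\ell^q}$ infinite while the geometric decay of $\varepsilon_{n_k}$ keeps the $\ell^q(\beta)$-norms bounded. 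One should also be slightly careful in $(ii)\Rightarrow(i)$ that the ambient normalization really lets the $\ell^q(\beta)$-norm of $\{E(f,A_n)\}$ dominate $\|f\|_X$ (e.g.\ $A_0=\{0\}$ and $b_0>0$); otherwise a set $M\subseteq A_0$ that is unbounded in $X$ would satisfy $(ii)$ without being relatively compact.
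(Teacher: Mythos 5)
Your proof is correct and follows essentially the same route as the paper: both implications are reduced to Theorem \ref{timan}, with $(i)\Rightarrow(ii)$ handled by concentrating the weights on a sparse subsequence along which $E(M,A_n)$ decays geometrically (the paper takes $b_{n_k}=1$ and $b_n=1/(2^n\alpha_n)$ elsewhere, you take $b_{n_k}=2^{k(1-1/q)}$ and $b_n=2^{-n}$, which has the minor advantage of avoiding division by possibly vanishing $\alpha_n$), and $(ii)\Rightarrow(i)$ handled by the same partial-sum estimate forcing $E(M,A_n)\to 0$ before invoking Theorem \ref{timan}. Your closing caveat that one needs $b_0>0$ together with the normalization $A_0=\{0\}$ to get boundedness of $M$ in $X$ is well taken: the paper's proof of $(ii)\Rightarrow(i)$ silently assumes $b_0>0$ even though that hypothesis is absent from statement $(ii)$, so you have reproduced (and made explicit) exactly the assumption the original argument relies on.
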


\begin{proof}
We first show $(i)\Rightarrow (ii)$. If $M$ is relatively compact, then Theorem \ref{timan} proves that $\alpha_n=E(M,A_n)$ satisfies $\{\alpha_n\}\in c_0$ and $E(x,A_n)\leq \alpha_n$ for all $x\in M$ and all $n\in\mathbb{N}$. Thus, if $q=\infty$, then $\sup_{x\in M}\|\{E(x,M)\}\|_{\ell^{\infty}(\{\frac{1}{\alpha_n}\})}\leq 1$ and $M$ is a bounded subset of $A(X,\{A_n\},\ell^{\infty}(\{\frac{1}{\alpha_n}\}))$.

Let us now assume that $q<\infty$. Take $\{n_k\}$ a sequence of natural numbers such that $\alpha_{n_k}\leq 2^{-k}$, $k=1,2,\cdots$ and consider the sequence
$\beta=\{b_n\}$ defined by $b_{n_k}=1$, $k=1,2,\cdots$, and $b_n=\frac{1}{2^n\alpha_n}$ for $n\in\mathbb{N}\setminus\{n_k\}_{k=1}^{\infty}$. Then $\|\beta\|_{\ell^q}^q\geq \sum_{k=1}^\infty b_{n_k}^q=\infty$  and, for each $x\in M$,
\begin{eqnarray*}
\|x\|_{A(X,\{A_n\},\ell^{q}(\beta))}^q & = & \sum_{k=1}^{\infty}E(x,A_{n_k})^q+  \sum_{n\in \mathbb{N}\setminus\{n_k\}_{k=1}^{\infty}}E(x,A_{n})^q(\frac{1}{2^n\alpha_n})^q \\
&\leq &   \sum_{k=1}^{\infty}2^{-kq}+  \sum_{n\in \mathbb{N}\setminus\{n_k\}_{k=1}^{\infty}}\frac{1}{2^{qn}}\leq 3,
\end{eqnarray*}
so that $M$ is a bounded subset of $A(X,\{A_n\},\ell^{q}(\beta))$.

Let us prove $(ii)\Rightarrow (i)$. Let $\beta=\{b_n\}$ be a sequence of nonnegative real numbers such that $b_0>0$ and $\|\beta\|_{\ell^q}=\infty$. Assume that $M$ is a bounded subset of $A(X,\{A_n\},\ell^{q}(\beta))$. Then, $b_0>0$ implies that $M$ is also bounded in $X$. Furthermore, given $x\in M$, we have that
\begin{eqnarray*}
E(x,A_n)\|\{b_k\}_{k=0}^n\}\|_{\ell^q} &\leq & \|\{b_kE(x,A_k)\}_{k=0}^n\|_{\ell^q} \\
&\leq&  \|\{b_kE(x,A_k)\}_{k=0}^\infty\|_{\ell^q}=\|x\|_{A(X,\{A_n\},\ell^{q}(\beta))}\leq C
\end{eqnarray*}
for a certain constant $C$ and all $n\in \mathbb{N}$. This shows that $\{E(M,A_n)\}\searrow 0$, since  $\|\beta\|_{\ell^q}=\infty$ and the estimation above holds for all $x\in M$. Theorem \ref{timan} implies that $M$ is a compact subset of $X$.
 \end{proof}

\begin{cor}Assume that $\|\beta\|_{\ell^q}=\infty$, where $\beta=\{b_n\}$ is a sequence of nonnegative real numbers and $b_0>0$. If $(X,\{A_n\})$ is a linear approximation scheme with $\dim A_n<\infty$ for all $n$,  the embedding $A(X,\{A_n\},\ell^{q}(\beta))\hookrightarrow X$ is compact.
\end{cor}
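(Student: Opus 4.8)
\emph{Proof proposal.} The plan is to deduce the corollary directly from Theorem \ref{comp_dim_fin}, so the work is almost entirely a matter of setting up the right reduction. First I would observe that since each $A_n$ is a finite-dimensional subspace of the normed space $X$, it is boundedly compact (closed bounded subsets of a finite-dimensional normed space are compact), so $(X,\{A_n\})$ is an approximation scheme of exactly the kind to which Theorem \ref{comp_dim_fin} applies. Moreover, for a linear approximation scheme the weighted space $\ell^q(\beta)$ is admissible, so $A(X,\{A_n\},\ell^q(\beta))$ is a genuine (quasi-)normed space and the inclusion map $\iota\colon A(X,\{A_n\},\ell^q(\beta))\to X$ is well defined.

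Next I would recall the standard fact that a linear map between normed spaces is compact precisely when it sends the closed unit ball of its domain to a relatively compact subset of the target. Accordingly, set $M:=\{x:\|x\|_{A(X,\{A_n\},\ell^q(\beta))}\le 1\}$, the closed unit ball of $A(X,\{A_n\},\ell^q(\beta))$, viewed as a subset of $X$ through $\iota$. By construction $M$ is a bounded subset of $A(X,\{A_n\},\ell^q(\beta))$, and by hypothesis $b_0>0$ and $\|\beta\|_{\ell^q}=\infty$; hence $M$ satisfies condition $(ii)$ of Theorem \ref{comp_dim_fin} with this very $\beta$. Theorem \ref{comp_dim_fin} then yields that $M$ is relatively compact in $X$, which is exactly the assertion that $\iota$ is a compact embedding. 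Note that the relative compactness of $M$ in $X$ in particular forces $M$ to be bounded in $X$, so $\iota$ is a bounded operator and the phrase ``compact embedding'' is meaningful; this is the same observation, already used inside the proof of Theorem \ref{comp_dim_fin}, that $b_0>0$ makes a bounded subset of $A(X,\{A_n\},\ell^q(\beta))$ bounded in $X$.

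There is essentially no hard step: all the analytic content sits in Theorem \ref{comp_dim_fin}. The only things to get right are the routine passage from compactness of an embedding to relative compactness of the image of the unit ball, and the trivial remark that the unit ball of the domain is a bounded subset of the domain. The one mild subtlety worth flagging — and the reason the statement is restricted to linear schemes with $\dim A_n<\infty$ — is that one needs both $\ell^q(\beta)$ to be admissible (so that $\|\cdot\|_{A(X,\{A_n\},\ell^q(\beta))}$ is an honest quasi-norm) and each $A_n$ boundedly compact (so that Theorem \ref{comp_dim_fin} is available), and finite-dimensionality delivers both at once.
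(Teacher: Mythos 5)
Your proposal is correct and follows the same route as the paper: admissibility of $\ell^q(\beta)$ from linearity of the $A_n$, bounded compactness from $\dim A_n<\infty$, and then the implication $(ii)\Rightarrow(i)$ of Theorem \ref{comp_dim_fin} applied to the unit ball of $A(X,\{A_n\},\ell^q(\beta))$. The paper states this more tersely (``the Corollary is just a restatement of $(ii)\Rightarrow(i)$''), but the content is identical.
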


\begin{proof}
The linearity of $A_n$ guarantees that  $\ell^{q}(\beta)$ is an admissible sequence space for all $\beta$, so that $A(X,\{A_n\},\ell^q(\beta))$ is a Banach space and $A(X,\{A_n\},\ell^{q}(\beta))\hookrightarrow X$ is an embedding. Now the Corollary is just a restatement of the implication $(ii)\Rightarrow (i)$ in Theorem \ref{comp_dim_fin}.
\end{proof}

\begin{cor}
Assume that $(Y,\{A_n\})$ is a linear approximation scheme with $\dim A_n<\infty $  for all $n\in\mathbb{N}$, $X$ is a Banach space, $q\in [1,\infty]$ and $T\in L(X,Y)$. Then the following are equivalent statements:
\begin{itemize}
 \item[$(i)$] $T\in \mathcal{K}(X,Y)$ (i.e., $T$ is a compact operator).
 \item[$(ii)$] There exists a sequence of non-negative real numbers $\beta=\{b_n\}_{n=0}^{\infty}$ such that $b_0>0$, $\|\beta\|_{\ell^q}=\infty$, and $T\in L(X, A(Y,\{A_n\},\ell^q(\beta)))$.
\end{itemize}
\end{cor}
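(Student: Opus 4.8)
The plan is to deduce the statement directly from Theorem~\ref{comp_dim_fin} applied to the set $M=T(U_X)$, the image of the closed unit ball $U_X\subseteq X$. Two elementary remarks form the bridge. First, by definition $T\in\mathcal{K}(X,Y)$ if and only if $M$ is a relatively compact subset of $Y$. Second, since each $A_n$ is a finite dimensional vector subspace of $Y$, the sequence space $\ell^q(\beta)$ is admissible for every weight $\beta$, so $A(Y,\{A_n\},\ell^q(\beta))$ is a Banach space continuously embedded in $Y$; moreover, because the approximation errors $E(\cdot,A_n)$ (by $(A2)$) and the $\ell^q(\beta)$-norm are absolutely homogeneous, the assertion ``$T\in L(X,A(Y,\{A_n\},\ell^q(\beta)))$'' is equivalent to ``$M$ is a bounded subset of $A(Y,\{A_n\},\ell^q(\beta))$'', and in that case $\sup_{y\in M}\|y\|_{A(Y,\{A_n\},\ell^q(\beta))}=\|T\|_{L(X,A(Y,\{A_n\},\ell^q(\beta)))}$.

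For $(i)\Rightarrow(ii)$ I would assume $T$ compact, so that $M$ is relatively compact in $Y$, and then invoke the implication $(i)\Rightarrow(ii)$ of Theorem~\ref{comp_dim_fin}: it produces a sequence $\beta=\{b_n\}$ of nonnegative reals with $\|\beta\|_{\ell^q}=\infty$ for which $M$ is bounded in $A(Y,\{A_n\},\ell^q(\beta))$. If the resulting $b_0$ is not already positive I would simply replace it by $1$; this does not affect $\|\beta\|_{\ell^q}=\infty$, and it enlarges each norm $\|y\|_{A(Y,\{A_n\},\ell^q(\beta))}$ only through the $n=0$ term, which on $M$ is bounded by $\sup_{y\in M}\|y\|_Y<\infty$, so $M$ stays bounded in $A(Y,\{A_n\},\ell^q(\beta))$. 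By the homogeneity remark above this says exactly that $T\in L(X,A(Y,\{A_n\},\ell^q(\beta)))$, which is $(ii)$.

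For $(ii)\Rightarrow(i)$ I would take the $\beta$ given by $(ii)$, note that $M=T(U_X)$ is then a bounded subset of $A(Y,\{A_n\},\ell^q(\beta))$ (its elements $y$ satisfy $\|y\|_{A(Y,\{A_n\},\ell^q(\beta))}\le\|T\|$), and apply the implication $(ii)\Rightarrow(i)$ of Theorem~\ref{comp_dim_fin}, whose hypotheses $b_0>0$ and $\|\beta\|_{\ell^q}=\infty$ are built into $(ii)$; it yields that $M$ is relatively compact in $Y$, i.e.\ $T\in\mathcal{K}(X,Y)$.

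There is no genuinely hard step here: once the dictionary ``compact operator $\longleftrightarrow$ relatively compact image of the unit ball'' is in place, the corollary is a transcription of Theorem~\ref{comp_dim_fin} through it. The only point that needs a word of care is the normalization $b_0>0$ demanded in $(ii)$, which is not literally guaranteed by the $\beta$ coming out of Theorem~\ref{comp_dim_fin}; this is handled by the trivial perturbation of $b_0$ described above, and one should just verify that this perturbation preserves both $\|\beta\|_{\ell^q}=\infty$ and the boundedness of $M$ in the approximation space.
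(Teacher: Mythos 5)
Your proposal is correct and follows essentially the same route as the paper: both reduce the statement to Theorem~\ref{comp_dim_fin} applied to $M=T(U_X)$, using the dictionary between compactness of $T$ and relative compactness of $T(U_X)$ (the paper phrases the reverse implication via the compactness of the embedding $A(Y,\{A_n\},\ell^{q}(\beta))\hookrightarrow Y$, which is just a restatement of the same implication of that theorem). Your explicit attention to the normalization $b_0>0$ and to the homogeneity argument identifying ``$T\in L(X,A(Y,\{A_n\},\ell^q(\beta)))$'' with boundedness of $T(U_X)$ in the approximation space is a welcome bit of extra care that the paper leaves implicit.
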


\begin{proof} $(i)\Rightarrow (ii)$. By hypothesis, $T(U_X)$ is relatively compact in $Y$, so that there exists $\beta=\{b_n\}$ is a sequence of nonnegative real numbers such that $\|\beta\|_{\ell^q}=\infty$, $b_0>0$, and $T(U_X)$ is a bounded subset of  $A(Y,\{A_n\},\ell^q(\beta))$. Hence $T\in L(X, A(Y,\{A_n\},\ell^q(\beta)))$.

 $(ii)\Rightarrow (i)$. This implication follows directly from the compactness of the embedding  $A(Y,\{A_n\},\ell^{q}(\beta))\hookrightarrow Y$.
\end{proof}

Theorem \ref{comp_dim_fin}, in conjunction with the reiteration property of approximation spaces, was used by Almira and Luther to prove a compactness criterium for subsets of generalized approximation spaces and, as a corollary, a characterization of convergence in these spaces.

To state these results it is necessary to introduce a little bit more notation. Concretely, given $\beta=\{b_n\}_{n=0}^\infty$ a sequence of positive real numbers, we define the sequence spaces
\[
\ell_0^q(\beta) = \left \{
\begin{array}{llllll}
\ell^q(\beta) \text{, whenever } q<\infty \\
c_0(\beta)=\{\{a_n\}:\lim_{n\to\infty}a_nb_n=0\} \text{, if } q=+\infty
 \end{array}\right. .
\]
These spaces appear here because, to use the reiteration property with an approximation space $A(X,\{A_n\},S)$, it is necessary that $\bigcup_nA_n$ be dense in $A(X,\{A_n\},S)$ and, if $S=\ell^q(\beta)$ with $\|\beta\|_{\ell^q}=+\infty$, then the closure of $\bigcup_nA_n$ in  $A(X,\{A_n\},\ell^q(\beta))$ is  $A(X,\{A_n\},\ell_0^q(\beta))$.

\begin{theorem} \label{comp_appr_spa_dim_fin} Assume that $\|\beta\|_{\ell^q}=\infty$, where $\beta=\{b_n\}$ is a sequence of nonnegative real numbers and $b_0>0$, and let $(X,\{A_n\})$ be a linear approximation scheme with $\dim A_n<\infty$ for all $n$. The following assertions are equivalent:
\begin{itemize}
\item[$(i)$] $M$ is a relatively compact subset of  $A(X,\{A_n\},\ell^{q}_0(\beta))$.
\item[$(ii)$] There exists a sequence of nonnegative real numbers  $\gamma=\{a_n\}$ such that $a_0>0$, $\lim_{n\to\infty}a_n=\infty$, and  $M$ is a bounded subset of
 $A(X,\{A_n\},\ell^{q}_0(\{a_nb_n\}))$.
\end{itemize}
\end{theorem}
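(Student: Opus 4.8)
The plan is to obtain the statement by applying Theorem \ref{comp_dim_fin} not to $X$ but to the approximation space $Y:=A(X,\{A_n\},\ell^q_0(\beta))$ itself, and then using the reiteration theorem for approximation spaces to transport the resulting condition back to $X$. So the first thing I would do is check that $(Y,\{A_n\})$ is again a linear approximation scheme with $\dim A_n<\infty$. Each $A_n$, being finite dimensional, is contained in $Y$ (for $a\in A_n$ one has $E(a,A_k)=0$ whenever $k\ge n$, so $\|a\|_Y<\infty$), and the inclusions $A_n\subsetneq A_{n+1}$, strict in $X$, remain strict in $Y$; axioms $(A1)$ and $(A2)$ hold trivially for vector subspaces, with $K(n)=n$; and $(A3)$ holds precisely because we work with $\ell^q_0(\beta)$ rather than $\ell^q(\beta)$, since the closure of $\bigcup_n A_n$ in $A(X,\{A_n\},\ell^q(\beta))$ is exactly $Y$, as recalled just before the statement. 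Moreover $Y$ is a Banach space and each $A_n$, being a finite dimensional subspace of $Y$, is closed and boundedly compact in $Y$; hence Theorems \ref{timan} and \ref{comp_dim_fin} are available for $(Y,\{A_n\})$.

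The core of the argument is then a two step chain of equivalences. Theorem \ref{comp_dim_fin} applied to $(Y,\{A_n\})$ says that $M$ is relatively compact in $Y$ if and only if there is a sequence of nonnegative reals $\gamma=\{a_n\}$ with $a_0>0$, $\|\gamma\|_{\ell^q}=\infty$, and $M$ bounded in $A(Y,\{A_n\},\ell^q_0(\gamma))$ (for $q<\infty$ this is the space $A(Y,\{A_n\},\ell^q(\gamma))$ of Theorem \ref{comp_dim_fin}; the endpoint $q=\infty$ requires only minor adjustments). On the other hand, the reiteration theorem for generalized approximation spaces (Almira--Luther \cite{almiraluther2}, \cite{almiraluther1}; in the classical power scale this is Pietsch's reiteration theorem \cite{Pie}) identifies, with equivalent norms,
\[
A\bigl(A(X,\{A_n\},\ell^q_0(\beta)),\{A_n\},\ell^q_0(\gamma)\bigr)=A\bigl(X,\{A_n\},\ell^q_0(\{b_n\widehat{\gamma}_n\})\bigr),\qquad \widehat{\gamma}_n:=\Bigl(\sum_{k\le n}a_k^q\Bigr)^{1/q}
\]
(with $\widehat{\gamma}_n=\max_{k\le n}a_k$ when $q=\infty$). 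Combining the two facts, $M$ is relatively compact in $Y$ if and only if $M$ is bounded in $A(X,\{A_n\},\ell^q_0(\{b_n\widehat{\gamma}_n\}))$ for some $\gamma$ as above.

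It then remains to match this with the quantifier appearing in $(ii)$. The sequence $\widehat{\gamma}$ is nondecreasing, $\widehat{\gamma}_0=a_0>0$, and $\widehat{\gamma}_n\to\infty$ exactly when $\|\gamma\|_{\ell^q}=\infty$; so $\{\widehat{\gamma}_n\}$ is an admissible choice in $(ii)$, which gives the implication $(i)\Rightarrow(ii)$. Conversely, given a sequence $\{a_n\}$ as in $(ii)$, since $a_0,b_0>0$ the set $M$ is bounded in $X$ (as in the proof of Theorem \ref{comp_dim_fin}), hence also bounded in $A(X,\{A_n\},\ell^q_0(\{\max(a_n,1)b_n\}))$; the sequence $c_n:=\inf_{k\ge n}\max(a_k,1)$ is nondecreasing, tends to $\infty$, satisfies $c_0\ge 1>0$ and $c_nb_n\le\max(a_n,1)b_n$, so $M$ stays bounded in $A(X,\{A_n\},\ell^q_0(\{b_nc_n\}))$; and every such nondecreasing $\{c_n\}$ equals $\widehat{\gamma}$ for the choice $a_0=c_0$, $a_n=(c_n^q-c_{n-1}^q)^{1/q}$, which has $\|\gamma\|_{\ell^q}=\infty$. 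By the previous paragraph this yields $(ii)\Rightarrow(i)$.

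The step I expect to be the main obstacle is pinning down the reiteration identity in exactly the form used, especially the fact that the outer weight $\gamma$ enters through its ``discrete primitive'' $\widehat{\gamma}$ rather than through $\gamma$ itself --- this is the precise reason the hypothesis ``$\|\gamma\|_{\ell^q}=\infty$'' of Theorem \ref{comp_dim_fin} becomes ``$\lim_n a_n=\infty$'' here --- together with the fact that one must use the closed subspaces $\ell^q_0(\cdot)$ so that $\bigcup_n A_n$ is dense in each intermediate space, without which reiteration fails. Everything else (the scheme axioms for $(Y,\{A_n\})$, the endpoint $q=\infty$, and the bookkeeping with possibly vanishing entries of $\{a_n\}$) is routine; in fact the implication $(i)\Rightarrow(ii)$ can alternatively be proved without reiteration, starting from $E(M,A_n)\searrow 0$ (Theorem \ref{timan}) and the elementary lower bound for linear schemes, $\inf_{a\in A_n}\|x-a\|_Y^q\ge\sum_{k\ge n}b_k^q E(x,A_k)^q$, via an Abel summation that produces a weight $\{a_n\}$ with $a_n\to\infty$ growing slowly enough.
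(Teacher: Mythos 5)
Your overall strategy --- pass to $Y=A(X,\{A_n\},\ell^{q}_0(\beta))$, where each $A_n$ is still a finite-dimensional (hence boundedly compact) subspace and $\bigcup_n A_n$ is dense, apply Theorem \ref{comp_dim_fin} there, and translate back to $X$ --- is precisely the route the paper attributes to Almira--Luther (the paper itself gives no proof, only the citation), and your implication $(i)\Rightarrow(ii)$ is sound: the one-sided estimate $\sum_n b_n^q\widehat{\gamma}_n^qE(x,A_n)^q=\sum_k a_k^q\sum_{n\ge k}b_n^qE(x,A_n)^q\le\sum_k a_k^qE_Y(x,A_k)^q$ holds, and $\widehat{\gamma}_n\nearrow\infty$.

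The problem is the reiteration identity you use for $(ii)\Rightarrow(i)$: it is false as stated, and that half of the argument collapses with it. For $a\in A_n$ and $k<n$ one has $E(x-a,A_k)\ge E(x-a,A_n)=E(x,A_n)$, so in fact $E_Y(x,A_n)^q\asymp \widehat{\beta}_{n-1}^{\,q}E(x,A_n)^q+\sum_{k\ge n}b_k^qE(x,A_k)^q$, where $\widehat{\beta}_{n-1}=(\sum_{k<n}b_k^q)^{1/q}$; hence $A(Y,\{A_n\},\ell^q_0(\gamma))$ is the approximation space over $X$ with weight $\delta_n\asymp(a_n^q\widehat{\beta}_{n-1}^{\,q}+b_n^q\widehat{\gamma}_n^q)^{1/q}$, not $b_n\widehat{\gamma}_n$ alone, and the omitted term is not dominated by $b_n\widehat{\gamma}_n$ for irregular $\beta$. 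Concretely, with $q=1$, $\beta=(1,\varepsilon,1,1,\dots)$ and $c=\widehat{\gamma}=(1,1+T,2+T,\dots)$ (so $a_1=T$), an element with $E(x,A_0)=E(x,A_1)=1$ and $E(x,A_k)=0$ for $k\ge 2$ has $\sum_n b_nc_nE(x,A_n)\le 1+\varepsilon(1+T)$, yet $\|x\|_{A(Y,\gamma)}\ge a_1\widehat{\beta}_0E(x,A_1)=T$; stringing such blocks along a single fixed $\beta$ and $c$ yields a set $M$ satisfying $(ii)$ that is not bounded in $A(Y,\{A_n\},\ell^q_0(\gamma))$ for your choice $a_n=(c_n^q-c_{n-1}^q)^{1/q}$, so Theorem \ref{comp_dim_fin} cannot be invoked in $Y$ this way. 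The repair stays inside your framework but bypasses reiteration: prove $(ii)\Rightarrow(i)$ by applying Theorem \ref{timan} in $Y$. Indeed, boundedness in $A(X,\{A_n\},\ell^q_0(\{a_nb_n\}))$ gives, uniformly on $M$, $\sum_{k\ge n}b_k^qE(x,A_k)^q\le C^q/(\inf_{k\ge n}a_k)^q\to 0$ and $E(x,A_n)^q\sum_{k\le n}a_k^qb_k^q\le C^q$ with $\sum_k a_k^qb_k^q=\infty$, so $E(x,A_n)\to0$ uniformly; splitting $\widehat{\beta}_{n-1}^{\,q}E(x,A_n)^q\le \widehat{\beta}_m^{\,q}E(x,A_n)^q+\sum_{m<k<n}b_k^qE(x,A_k)^q$ and letting $n\to\infty$, then $m\to\infty$, shows $E_Y(M,A_n)\to0$; since $M$ is also bounded in $Y$ (your own argument), Theorem \ref{timan} applied in $Y$ gives $(i)$.
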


\begin{theorem}
Let us assume the hypotheses of Theorem \ref{comp_appr_spa_dim_fin}. The sequence $\{f_n\}\subseteq  A(X,\{A_n\},\ell^{q}_0(\beta))$ is convergent in the norm of $A(X,\{A_n\},\ell^{q}_0(\beta))$ if and only if it is convergent in the norm of $X$ and it forms a relatively compact subset of $A(X,\{A_n\},\ell^{q}_0(\beta))$.
\end{theorem}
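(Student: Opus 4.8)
The plan is to treat this as a general topological fact about the inclusion $j\colon Y\hookrightarrow X$, where $Y:=A(X,\{A_n\},\ell^{q}_0(\beta))$. Two preliminary remarks are needed. First, since the approximation scheme is linear, $\ell^{q}(\beta)$ is admissible and $A(X,\{A_n\},\ell^{q}(\beta))$ is a Banach space; as $Y$ is by definition the closure of $\bigcup_nA_n$ in it, $Y$ is a closed subspace, hence itself a Banach space under the inherited norm $\|x\|_Y=\|\{b_nE(x,A_n)\}\|_{\ell^{q}}$. Second, the Corollary above asserting compactness of $A(X,\{A_n\},\ell^{q}(\beta))\hookrightarrow X$ gives in particular the continuity of that embedding, hence of its restriction $j$; thus there is a constant $C>0$ with $\|x\|_X\le C\|x\|_Y$ for all $x\in Y$, and, $X$ being a normed space, limits in $\|\cdot\|_X$ are unique.

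I would then treat the two implications separately. For $(\Rightarrow)$, assume $f_n\to f$ in $\|\cdot\|_Y$: then $\{f_n\}$ is relatively compact in $Y$, because a convergent sequence together with its limit is a compact set, and continuity of $j$ yields $f_n\to f$ in $\|\cdot\|_X$. For the converse, assume $f_n\to f$ in $\|\cdot\|_X$ and that $\{f_n\}$ is relatively compact in $Y$. The key step is to show that every $\|\cdot\|_Y$-convergent subsequence of $\{f_n\}$ has limit $f$ (such subsequences exist by relative compactness, so this also shows $f\in Y$): indeed, if $f_{n_k}\to g$ in $Y$, then applying $j$ gives $f_{n_k}\to g$ in $\|\cdot\|_X$, while $f_{n_k}\to f$ in $\|\cdot\|_X$ as a subsequence of an $X$-convergent sequence, whence $g=f$ by uniqueness of limits in $X$. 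A routine subsequence argument then closes the proof: if $f_n\not\to f$ in $\|\cdot\|_Y$, one could find $\varepsilon>0$ and indices $n_1<n_2<\cdots$ with $\|f_{n_j}-f\|_Y\ge\varepsilon$ for all $j$; relative compactness of $\{f_n\}$ in $Y$ would then furnish a further subsequence converging in $\|\cdot\|_Y$, necessarily to $f$ by the previous step, contradicting $\|f_{n_j}-f\|_Y\ge\varepsilon$. Hence $f_n\to f$ in $\|\cdot\|_Y$.

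I do not expect a genuine obstacle here: the entire content is the interaction of the two norms through the continuous embedding $Y\hookrightarrow X$ (equivalently, through the hypothesis $b_0>0$), and this is already available from the Corollary quoted above. The only subtlety worth flagging is that assertion $(ii)$ does not say a priori that the $\|\cdot\|_X$-limit $f$ belongs to $Y$; this is recovered automatically inside the subsequence argument, so no separate step is needed. One may further remark that neither Theorem~\ref{comp_appr_spa_dim_fin} nor the finite-dimensionality of the $A_n$ is actually used in this argument --- only that $Y$ is a normed space and that $Y\hookrightarrow X$ is continuous --- so the statement holds well beyond the hypotheses assumed here.
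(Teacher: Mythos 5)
Your proof is correct. Note, however, that the paper itself gives no proof of this statement: it is quoted from Almira--Luther and presented as a corollary of the compactness criterion of Theorem~\ref{comp_appr_spa_dim_fin} (via the reiteration property), so there is no argument in the text to compare against line by line. Your route is different in spirit and noticeably more elementary: you use only that $Y=A(X,\{A_n\},\ell^{q}_0(\beta))$ is a Banach space continuously embedded in $X$ (which follows from $b_0>0$, since $b_0\|x\|_X=b_0E(x,A_0)\le\|x\|_Y$, or from the quoted Corollary), together with the standard ``every subsequence has a further subsequence converging to $f$'' device and uniqueness of limits in the Hausdorff space $X$. This buys generality --- as you correctly observe, neither $\dim A_n<\infty$ nor $\|\beta\|_{\ell^q}=\infty$ plays any role, and the statement is really about an arbitrary continuous injection of a Banach space into a normed space --- at the cost of saying nothing new about \emph{how} to verify relative compactness in $Y$, which is where Theorem~\ref{comp_appr_spa_dim_fin} and the hypotheses actually earn their keep. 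The only point worth flagging is that the continuity of the embedding rests on the convention $A_0=\{0\}$ (so that $E(x,A_0)=\|x\|_X$); the paper uses the same convention implicitly in the proof of Theorem~\ref{comp_dim_fin}, so this is consistent, but it deserves a word. Your handling of the one genuine subtlety --- that the $X$-limit $f$ is not assumed a priori to lie in $Y$, and is recovered as the $Y$-limit of some subsequence --- is correct.
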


\subsection{Characterization of compactness with arbitrary linear approximation sche\-mes and some applications}
So far, we have imposed over $A_n$ being boundedly compact or, even more, being a finite dimensional linear space. Obviously, these impositions were necessary for our proofs, but it is also true that they are strong assumptions. Is it possible, for example, to give some compactness criterium by using linear approximation schemes $(X,\{A_n\})$ if we allow $\dim A_n=\infty$?
Obviously, in those cases the characterization of compactness should be more complicated since being bounded in $A(X,\{A_n\},\ell^{q}_0(\beta))$ will not be a sufficient condition for a bounded subset of $X$ in order to be relatively compact. The reason is simple: the unit ball of $A_n$, which is not relatively compact since $A_n$ is infinite dimensional, is bounded in  $A(X,\{A_n\},\ell^{q}_0(\beta))$ for all $\beta$. Now, Almira and Luther \cite{almiraluther1} proved that, if $M$ is a bounded subset of  $A(X,\{A_n\},\ell^{q}_0(\beta))$, then compactness of $M$ as a subset of $X$ will follow from some extra assumptions.

\begin{theorem} \label{comp_dim_inf}
Let $(X,\{A_k\})$ be a linear approximation scheme and assume that there exist linear projections $P_k:X\to X$ with $P_k(X)=A_k$ for all $k\in\mathbb{N}$, and $\sup_{k\in\mathbb{N}}\|P_k\|=K<\infty$. Given  $M\subseteq X$ and $q\in [1,\infty ]$, the following are equivalent statements:
\begin{itemize}
\item[$(i)$] $M$ is relatively compact in $X$.
\item[$(ii)$] $P_k(M)$ is relatively compact in $X$ for $k=1,2\cdots$ and there exists $\beta=\{b_k\}\subseteq [0,\infty)$ such that $\|\beta\|_{\ell^q}=+\infty$, $b_0>0$ and $M$ is bounded in $A(X,\{A_k\},\ell^q(\beta))$.
\end{itemize}
\end{theorem}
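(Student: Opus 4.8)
The plan is to exploit the uniformly bounded projections $P_k$ to relate global compactness of $M$ in $X$ to compactness of the finite (level-$k$) pieces $P_k(M)$, together with a tail-smallness condition that is exactly captured by boundedness in $A(X,\{A_k\},\ell^q(\beta))$ for a suitable divergent weight $\beta$. For the implication $(i)\Rightarrow(ii)$ I would argue as follows. If $M$ is relatively compact in $X$, then each $P_k(M)$ is relatively compact because $P_k$ is a bounded (hence continuous) linear operator and continuous images of relatively compact sets are relatively compact. For the existence of $\beta$, I would first observe that $E(x,A_k)\le \|x-P_k(x)\|\le (1+K)\,\mathrm{dist}(x,A_k)$ — actually, more simply, $E(x,A_k)\le\|x-P_k x\|$ and, conversely, for any $a\in A_k$ one has $\|x-P_k x\|=\|(I-P_k)(x-a)\|\le(1+K)\|x-a\|$, so $E(x,A_k)\le\|x-P_kx\|\le(1+K)E(x,A_k)$; thus the approximation errors $E(x,A_k)$ are comparable to $\|x-P_kx\|$. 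Relative compactness of $M$ gives, via Theorem \ref{timan} applied with this linear approximation scheme (the implication $(i)\Rightarrow(ii)$ there holds for arbitrary approximation schemes), that $\alpha_k:=E(M,A_k)=\sup_{x\in M}E(x,A_k)\to 0$. Then I would construct $\beta$ exactly as in the proof of Theorem \ref{comp_dim_fin}: pick a subsequence $n_k$ with $\alpha_{n_k}\le 2^{-k}$, set $b_{n_k}=1$ and $b_n=1/(2^n\alpha_n)$ off that subsequence (and $b_0>0$, adjusting if $0$ lies in the subsequence), so that $\|\beta\|_{\ell^q}=\infty$ while $\sup_{x\in M}\|\{b_kE(x,A_k)\}\|_{\ell^q}<\infty$, i.e.\ $M$ is bounded in $A(X,\{A_k\},\ell^q(\beta))$.

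For the converse $(ii)\Rightarrow(i)$, which I expect to be the heart of the matter, I would use the Hausdorff criterion directly: given $\varepsilon>0$, produce a finite $\varepsilon$-net for $M$. Boundedness of $M$ in $A(X,\{A_k\},\ell^q(\beta))$ with $b_0>0$ forces $M$ bounded in $X$, and the same computation as in Theorem \ref{comp_dim_fin} (summing a single term of the $\ell^q(\beta)$-norm) shows $E(x,A_k)\,\|\{b_j\}_{j=0}^k\|_{\ell^q}\le C$ for all $x\in M$; since $\|\beta\|_{\ell^q}=\infty$, this yields $E(M,A_k)\to 0$. Hence choose $N$ with $E(M,A_N)<\varepsilon/3$, so every $x\in M$ decomposes as $x=a(x)+r(x)$ with $a(x)\in A_N$ and $\|r(x)\|<\varepsilon/2$. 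Now I would \emph{not} use $a(x)$ but rather $P_N(x)$: from the comparison above, $\|x-P_Nx\|\le(1+K)E(x,A_N)<(1+K)\varepsilon/3$, so it suffices to $\eta$-net $P_N(M)$ for $\eta=\varepsilon/3$ say (after rescaling constants). Since $P_N(M)$ is relatively compact in $X$ by hypothesis, it has a finite $\eta$-net $\{c_1,\dots,c_s\}\subseteq X$; then for each $x\in M$ there is $i$ with $\|P_Nx-c_i\|\le\eta$, whence $\|x-c_i\|\le\|x-P_Nx\|+\|P_Nx-c_i\|\le (1+K)\varepsilon/3+\varepsilon/3\le\varepsilon$ after choosing the thresholds with the factor $(1+K)$ absorbed. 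Thus $\{c_1,\dots,c_s\}$ is a finite $\varepsilon$-net for $M$, and Hausdorff's theorem gives relative compactness of $M$ in $X$.

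The main obstacle, and the place where the uniform bound $K=\sup_k\|P_k\|$ is genuinely needed, is controlling $\|x-P_Nx\|$ by the approximation error $E(x,A_N)$ uniformly over $x\in M$: without uniformly bounded projections one cannot pass from ``$M$ is close to $A_N$'' to ``$P_N(M)$ essentially captures $M$,'' and the counterexample in the paragraph preceding the theorem (the unit ball of an infinite-dimensional $A_n$) shows the conclusion genuinely fails without some such hypothesis. A secondary technical point is the bookkeeping of constants (the factor $1+K$) in the $\varepsilon$-net estimate and making sure $b_0>0$ in the construction of $\beta$; both are routine once the comparison $E(x,A_k)\le\|x-P_kx\|\le(1+K)E(x,A_k)$ is in hand. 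One should also note, as the authors surely intend, that everything is stated for normed $X$, so no quasi-norm subtleties arise in the triangle-inequality steps.
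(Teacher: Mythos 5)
Your proof is correct, and it rests on the same key inequality as the paper's argument, namely $E(x,A_k)\le\|x-P_kx\|\le(1+\|P_k\|)E(x,A_k)$ (obtained by writing $x-P_kx=(I-P_k)(x-a)$ for $a\in A_k$), combined with the observation that boundedness in $A(X,\{A_k\},\ell^q(\beta))$ forces $E(x,A_k)\|\{b_j\}_{j=0}^k\|_{\ell^q}\le C$ uniformly on $M$, hence $\sup_{x\in M}\|x-P_kx\|\to 0$. Where you diverge is in the final step of $(ii)\Rightarrow(i)$: you invoke Hausdorff's criterion and transport a finite $\varepsilon$-net of the relatively compact set $P_N(M)$ to an $\varepsilon$-net of $M$, whereas the paper runs a sequential diagonal argument, extracting nested index sets $\mathbb{M}_0\supseteq\mathbb{M}_1\supseteq\cdots$ on which the $P_{n_i}(f_m)$ are Cauchy and concluding that a diagonal subsequence of $\{f_m\}$ is Cauchy. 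Your route is shorter and cleaner for this theorem; the paper's more laborious bookkeeping (the inequalities involving $a_k$ and $c_k$) is deliberately structured so that it can be abstracted into the $(\beta,q)$-condition and reused verbatim in Theorem \ref{comp_dim_inf_gen}, which is what that formulation buys. The only loose ends in your write-up are routine: the constants in the $\varepsilon$-net estimate should be fixed as $(1+K)$-dependent thresholds from the start, and in the construction of $\beta$ for $(i)\Rightarrow(ii)$ one must (as in Theorem \ref{comp_dim_fin}, and as you note) treat separately the indices where $E(M,A_n)=0$ and ensure $b_0>0$; neither affects the argument.
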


\begin{proof}
The implication $(i)\Rightarrow (ii)$ is trivial. Indeed, it follows from $(i)\Rightarrow (ii)$ in Theorem \ref{timan} -which holds true for arbitrary approximation schemes $\{A_k\}$- that, if $M$ is relatively compact in $X$ then $\{E(M,A_k)\}\searrow 0$ and this is precisely what we need for the existence of the sequence $\beta$ with the desired properties. Furthermore, if $\{P_k(f_s)\}_{s=0}^\infty$ is an infinite sequence in $P_k(M)$ then $\{f_s\}$ is also an infinite sequence in $M$, so that it admits a convergent subsequence $\{f_{s_i}\}_{i=0}^\infty$. Obviously,  $\{P_k(f_{s_i})\}$ is also convergent since $\|P_k(f_{s_i})-P_k(f_{s_j})\|\leq \|P_k\|\|f_{s_i}-f_{s_j}\|$, which implies that $\{P_k(f_{s_i})\}$ is a Cauchy sequence.  This proves that $P_k(M)$ is relatively compact in $X$ for all $k$.

Let us prove $(ii)\Rightarrow (i)$. Let $\{f_m\}_{m=0}^\infty \subseteq M$ be an infinite sequence. We must show that $\{f_m\}$ contains a convergent subsequence. Let us define, for each $k\in\mathbb{N}$, $f_{k,m}=P_k(f_m)$. For each $x_k\in A_k$ we have that
\begin{eqnarray*}
\|f_m-P_k(f_m)\| &=& \|f_m-x_k-P_k(f_m-x_k)\|\leq \|I-P_k\|\|f_m-x_k\|\\
&\leq & (1+\|P_k\|)\|f_m-x_k\|.
\end{eqnarray*}
Thus, if we take the infimum between the elements $x_k\in A_k$, we get
\[
\|f_m-P_k(f_m)\|\leq (1+\|P_k\|)E(f_m,A_k)   \text{ for all } m,k\in\mathbb{N}.
\]
If we set $a_k=\displaystyle \frac{1+\|P_k\|}{\|\{b_i\}_{i=0}^k\|_{\ell^q}}$ and $c_k=\|\{b_i\}_{i=0}^k\|_{\ell^q}$, this inequality implies that
\begin{equation} \label{nueva}
\|f_m-P_k(f_m)\|\leq a_k(E(f_m,A_k)c_k+1) \text{ for all } m,k\in\mathbb{N}.
\end{equation}
Now, fixed $k\in\mathbb{N}$, every subsequence of $\{P_{k}(f_m)\}_{m=0}^\infty$ contains a convergent subsequence, since $P_k(M)$ is relatively compact in $X$, by hypothesis. In particular, the subsequence can be assumed to be of the form $\{P_k(f_m)\}_{m\in \mathbb{M}_0}$ ($\mathbb{M}_0$ an infinite subset of $\mathbb{N}$) and to satisfy the inequality
\begin{equation} \label{nuevados}
\|P_{k}(f_s)-P_{k}(f_t)\| \leq a_k[ (E(f_s,A_k)+E(f_t,A_k))c_k+1] +\varepsilon \,\,
\text{ for all } t,s\in\mathbb{M}_0\setminus [0,m_0(\varepsilon)),
\end{equation}
where $\varepsilon>0$ can be arbitrarily small and $m_0=m_0(\varepsilon)$ may depend on $\varepsilon$.

Using jointly the inequalities $\eqref{nueva}$ and $\eqref{nuevados}$, and the triangle inequality,
$$
\|f_t-f_s\|\leq \|f_t-P_k(f_t)\|+\|P_k(f_t)-P_k(f_s)\|+\|f_s-P_k(f_s)\|,
$$
we have that
\begin{equation} \label{nuevatres}
\|f_t-f_s\| \leq a_k[ 2(E(f_s,A_k)+E(f_t,A_k))c_k+3] +\varepsilon \\
\text{ for all } t,s\in\mathbb{M}_0\setminus [0,m_0(\varepsilon)).
\end{equation}
Obviously, $\|b\|_{\ell^q}=\infty$ and $\sup_{k\in\mathbb{N}}\|P_k\|=K<\infty$ imply that $\{a_k\}\in c_0$. Furthermore, the boudedness of $M$ in $A(X,\{A_k\},\ell^q(\beta))$ implies that $$(E(f_m,A_k)c_k)^q=E(f_m,A_k)^q\|\{b_i\}_{i=0}^k\|_{\ell^q}^q\leq \|f_m\|_{ A(X,\{A_k\},\ell^q(\beta))}^q\leq C^q$$ for all $m,k$ and a certain constant $C>0$.  Let us take $n_1<n_2<\cdots <n_i<\cdots$ a sequence of natural numbers such that $a_{n_i}(4C+3)\leq 2^{-i}$ for all $i=1,2,\cdots$, and a nested sequence of infinite sets $\mathbb{M}_i\subseteq \mathbb{N}$, $\mathbb{M}_{i+1}\subseteq \mathbb{M}_i$ for all $i$, such that
\begin{equation} \label{nuevacuatro}
\|f_t-f_s\| \leq a_{n_i}[ 2(E(f_s,A_{n_i})+E(f_t,A_{n_i}))c_{n_i}+3] +2^{-i} \leq 2^{-i}+2^{-i}=2^{-i+1}\\
\text{ for all } t,s\in\mathbb{M}_i.
\end{equation}
Then, if we choose $m_0<m_1<\cdots $ natural numbers such that $m_i\in\mathbb{M}_i$ for all $i$, the sequence $\{f_{m_i}\}$ satisfies $\|f_{m_i}-f_{m_j}\|\leq 2^{-i+1}$ for all $j\geq i$, so that it is a Cauchy sequence. This proves that $M$ is relatively compact.
\end{proof}

Obviously, if $\dim A_k<\infty$ for all $k$ and $M$ is a bounded subset of $X$, then $P_k(M)$ is relatively compact for all $k$. In this sense, Theorem \ref{comp_dim_inf} is clearly a generalization of Theorem \ref{timan}. On the other hand, if $X=H$ is a Hilbert space and $A_k$ is a closed subspace of $H$ for all $k$, then the orthogonal projections $P_k:H\to H$ ($P_k(H)=A_k$)  satisfy $\|P_k\|=1$ for all $k$, so that Theorem \ref{comp_dim_inf} can be useful in this context for arbitrary linear approximation schemes. In fact, in their paper \cite[Theorem 7.2]{almiraluther1}, the authors used this result to give a new proof of
Tjuriemskih's lethargy theorem \cite{tjuriemskih1}, \cite{tjuriemskih2} (see also  \cite{almiraluther2}, \cite{singerlibro}):

\begin{theorem}[Tjuriemskih] Let $(X,\{A_n\})$ be a nontrivial linear approximation scheme. Let $\{\varepsilon_n\}\searrow 0$ be a non-increasing sequence of positive numbers converging to zero, and let us assume that at least one of the following two conditions is fulfilled:
\begin{itemize}
\item[$(a)$] $\dim A_k<\infty$ for all $k\in\mathbb{N}$.
\item[$(b)$] $X$ is a Hilbert space.
\end{itemize}
Then there exists $f\in X$ such that $E(f,A_k)=\varepsilon_k$ for all $k\in\mathbb{N}$.
\end{theorem}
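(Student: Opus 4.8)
The plan is, in both cases, to construct for every $m\ge1$ an element $f_m$ that already realizes the prescribed errors at the first levels, to verify that $\{f_m\}$ is relatively compact in $X$ — which is where the compactness criteria of the present section enter — and to take $f$ to be a cluster point. Since $x\mapsto E(x,A_k)$ is $1$-Lipschitz, if $f_{m_i}\to f$ and $E(f_{m_i},A_k)=\varepsilon_k$ for all sufficiently large $i$, then $E(f,A_k)=\varepsilon_k$; carrying this out for each $k$ gives the theorem. Passing to the quotient $X/A_0$ (which is again a Banach space, respectively a Hilbert space, and carries the nontrivial linear scheme $\{A_n/A_0\}$, with the distances to the $A_n$ unchanged), one may and does assume throughout that $A_0=\{0\}$, so that $E(\,\cdot\,,A_0)=\|\cdot\|$.

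In case $(b)$ the construction can be made explicit. Nontriviality says each $A_{n-1}$ is a proper closed subspace of $A_n$, so one may choose $e_n\in A_n$ with $\|e_n\|=1$ and $e_n\perp A_{n-1}$ for $n\ge1$; then $\{e_n\}_{n\ge1}$ is orthonormal and $e_n\perp A_m$ whenever $m<n$. Put $\delta_n=(\varepsilon_{n-1}^2-\varepsilon_n^2)^{1/2}\ge0$. Since $\sum_{n\ge1}\delta_n^2=\varepsilon_0^2<\infty$, the series $f:=\sum_{n\ge1}\delta_ne_n$ converges in $X$, and the orthogonal projection onto $A_k$ fixes $e_n$ for $n\le k$ and annihilates it for $n>k$, so that $E(f,A_k)^2=\sum_{n>k}\delta_n^2=\varepsilon_k^2$ for every $k$. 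Alternatively — and this is the route actually followed in \cite{almiraluther1} — the partial sums $f_m=\sum_{n\le m}\delta_ne_n$ are bounded in $X$, satisfy $E(f_m,A_k)\le\varepsilon_k$ for all $k$, hence are bounded in $A(X,\{A_k\},\ell^q(\beta))$ for a suitable $\beta$ with $b_0>0$ and $\|\beta\|_{\ell^q}=\infty$ (constructed as in the proof of Theorem \ref{comp_dim_fin}), while the orthogonal projections $P_k$ have $\sup_k\|P_k\|=1$ and $P_k(f_m)$ is eventually constant in $m$; Theorem \ref{comp_dim_inf} then delivers the cluster point.

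In case $(a)$ everything reduces to the finite-dimensional assertion that, for each $m\ge1$, there is $f_m\in A_m$ with $E(f_m,A_k)=\varepsilon_k$ for $0\le k\le m-1$. Granting it, $\{f_m\}$ is bounded (as $\|f_m\|=E(f_m,A_0)=\varepsilon_0$) and $E(\{f_m\},A_k)=\varepsilon_k\to0$ (since $E(f_m,A_k)=\varepsilon_k$ when $k<m$ and $=0$ when $k\ge m$); as each $A_k$ is boundedly compact, Theorem \ref{timan} makes $\{f_m\}$ relatively compact, and a convergent subsequence $f_{m_i}\to f$ finishes the proof as above. For the finite-dimensional assertion the natural approach is induction on $m$: fix $z\in A_m$ with $\|z\|=\mathrm{dist}(z,A_{m-1})=1$ (possible since $\dim A_m<\infty$ and $A_{m-1}\subsetneq A_m$), so that $\mathrm{dist}(z,A_k)=1$ for all $k\le m-1$; look for $f_m=u+\varepsilon_{m-1}z$ with $u\in A_{m-1}$, which forces $E(f_m,A_{m-1})=\varepsilon_{m-1}$ automatically; and then choose $u$ so as to solve the remaining equations $E(u+\varepsilon_{m-1}z,A_k)=\varepsilon_k$ ($0\le k\le m-2$) by a Brouwer-type fixed-point argument, the relevant map of $A_{m-1}$ being continuous and proper and the inequalities $\varepsilon_k\ge\varepsilon_{m-1}=\mathrm{dist}(\varepsilon_{m-1}z,A_k)$ guaranteeing that each target value lies in its range.

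The step I expect to demand real work is precisely this last finite-dimensional existence lemma, which is in essence Bernstein's classical lethargy theorem. A greedy, level-by-level determination of the coefficients of $f_m$ accumulates multiplicative constants and breaks down when $\{\varepsilon_n\}$ decreases slowly, so the whole finite nonlinear system has to be solved simultaneously; setting up the induction correctly (the hypothesis must be strengthened to carry along an auxiliary shift vector) together with the fixed-point argument is where the real content lies. Once that lemma is in hand, the compactness results already proved in this section turn the passage to the limit into a routine verification.
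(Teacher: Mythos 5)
The survey itself contains no proof of this theorem: it is quoted as a known result, with the remark that \cite[Theorem 7.2]{almiraluther1} reproves it via Theorem \ref{comp_dim_inf}. So your proposal must stand on its own. Its architecture is the classical one and is sound: quotient out $A_0$, realize the prescribed errors at finitely many levels, use the compactness results of this section (Theorem \ref{timan} in case $(a)$, bounded projections and Theorem \ref{comp_dim_inf} in case $(b)$) to extract a convergent subsequence, and conclude from the $1$-Lipschitz continuity of $x\mapsto E(x,A_k)$. Case $(b)$ as you present it is in fact complete, and does not even need compactness: since nontriviality makes each $A_k$ closed, choosing $e_n\in A_n\cap A_{n-1}^{\perp}$ of norm one and setting $f=\sum_{n\geq 1}(\varepsilon_{n-1}^2-\varepsilon_n^2)^{1/2}e_n$ gives $E(f,A_k)^2=\sum_{n>k}(\varepsilon_{n-1}^2-\varepsilon_n^2)=\varepsilon_k^2$ exactly, as you say. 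This explicit route is simpler than (and different from) the compactness-based proof alluded to in the text.

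The genuine gap is in case $(a)$. You reduce everything to the finite-dimensional existence lemma: for each $m$ there is $f_m\in A_m$ with $E(f_m,A_k)=\varepsilon_k$ for $0\leq k\leq m-1$. That lemma (Bernstein's lemma) is the entire substance of case $(a)$ --- the subsequent limit argument via Theorem \ref{timan} is, as you note, routine --- and you do not prove it. Writing $f_m=u+\varepsilon_{m-1}z$ with $\|z\|=\mathrm{dist}(z,A_{m-1})=1$ correctly forces $E(f_m,A_{m-1})=\varepsilon_{m-1}$, but the remaining step, ``choose $u\in A_{m-1}$ solving the $m-1$ equations $E(u+\varepsilon_{m-1}z,A_k)=\varepsilon_k$ by a Brouwer-type fixed-point argument,'' is not an argument: the observation that each individual target value lies in the range of the corresponding scalar function of $u$ says nothing about simultaneous solvability of the nonlinear system, and you yourself concede that the induction hypothesis ``must be strengthened to carry along an auxiliary shift vector'' and that this ``is where the real content lies.'' As it stands the hard half of the theorem is assumed rather than proved. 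To close the gap you must either carry out that strengthened induction (or a connectedness/degree argument) in full --- the classical treatments, e.g.\ in \cite{singerlibro} or in \cite{bernsteininverso}, show it is genuinely delicate --- or import the finite-dimensional lemma with a precise citation, or else follow the route of \cite[Theorem 7.2]{almiraluther1} referred to in the text.
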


Furthermore, in \cite[Theorem 7.11]{almiraluther1}, Theorem \ref{comp_dim_inf} above were also used to prove a compactness criterium, which generalizes Kolmogorov's characterization of compactness in $L^p(\mathbb{R}^d)$ \cite{kolmogorov1931} (see also \cite[Theorem 5]{HOH}) and Simon's characterization of compactness for $L^p((0,T),X)$ \cite[Theorem 3.1]{simon}, for the spaces
$$L^p(\mathbb{R}^d,X)=\{f:\mathbb{R}^d\to X: f \text{ is measurable and} \|f\|_{L^p(\mathbb{R}^d,X)}=\left(\int_{\mathbb{R}^d}\|f(x)\|_X^pdx\right)^p<\infty\},$$
where $X$ is  (any)  Banach space.

\begin{theorem} A bounded set $M\subseteq L^p(\mathbb{R}^d,X)$ is relatively compact in  $L^p(\mathbb{R}^d,X)$ if and only if the following three conditions are satisfied:
\begin{itemize}
\item[$(i)$] $\lim_{k\to\infty}\int_{\|x\|\geq k}\|f(x)\|_X^pdx=0$ uniformly in $f\in M$.
\item[$(ii)$] The set $\{\int_{[a,b]}f(x)dx:f\in M\}\subseteq X$ is relatively compact for all $a,b\in\mathbb{R}^d$ with $a<b$. (This means that $a=(a_1,\cdots,a_d)$, $b=(b_1,\cdots,b_d)$ satisfy $a_i< b_i$ for all $i$, and $[a,b]=[a_1,b_1]\times \cdots\times [a_d,b_d]$).
\item[$(iii)$] $\lim_{\|h\|\to 0}\|f(\cdot+h)-f(\cdot)\|_p=0$ uniformly in $f\in M$.
\end{itemize}

\end{theorem}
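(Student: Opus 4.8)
The plan is to derive the statement from Theorem \ref{comp_dim_inf}, applied to a linear approximation scheme on $L^p(\mathbb{R}^d,X)$ consisting of dyadic $X$-valued step functions, with conditional expectations as projections. Throughout I take $1\le p<\infty$ (for $p=\infty$ the union below is not dense and the criterion fails in this form). For $k\in\mathbb{N}$ I would set $Q_k=[-2^k,2^k]^d$, let $\mathcal{D}_k$ be the family of dyadic cubes of side $2^{-k}$ contained in $Q_k$, let $A_k\subseteq L^p(\mathbb{R}^d,X)$ be the subspace of functions that vanish off $Q_k$ and are constant on each $R\in\mathcal{D}_k$, and let $P_k f$ be the conditional expectation of $\mathbf{1}_{Q_k}f$ with respect to the finite $\sigma$-algebra generated by $\mathcal{D}_k$, i.e. $P_k f=\sum_{R\in\mathcal{D}_k}\left(\frac{1}{|R|}\int_R f\right)\mathbf{1}_R$. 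One checks routinely that $P_k$ is idempotent with range $A_k$ and $\|P_k\|\le1$ (conditional expectation and multiplication by an indicator are contractions on $L^p$), that $A_0\subsetneq A_1\subsetneq\cdots$ is a strictly increasing chain of vector subspaces (so $(A1)$ holds with $K(n)=n$ and $(A2)$ is trivial), and that $\bigcup_k A_k$ is dense in $L^p(\mathbb{R}^d,X)$; hence $(L^p(\mathbb{R}^d,X),\{A_k\})$ is a linear approximation scheme with uniformly bounded projections. Since $M$ is bounded, and since for a bounded set the existence of an admissible weight $\beta$ as in Theorem \ref{comp_dim_inf}$(ii)$ is equivalent to $\lim_k E(M,A_k)=0$ (build $\beta$ from $\{E(M,A_k)\}$ exactly as in the proof of Theorem \ref{comp_dim_fin}), Theorem \ref{comp_dim_inf} reduces the statement to two equivalences: (a) ``$P_k(M)$ is relatively compact for every $k$'' $\Longleftrightarrow$ $(ii)$; and (b) ``$\lim_k E(M,A_k)=0$'' $\Longleftrightarrow$ $(i)\wedge(iii)$.

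For equivalence (a) I would use that $A_k$ is linearly homeomorphic to the finite power $X^{\mathcal{D}_k}$ via $g\mapsto(g|_R)_{R\in\mathcal{D}_k}$, the $R$-th coordinate of $P_k f$ being the average $|R|^{-1}\int_R f$. A subset of a finite power of $X$ is totally bounded if and only if every coordinate set is, so $P_k(M)$ is relatively compact for all $k$ if and only if $\{\int_R f:f\in M\}$ is relatively compact in $X$ for every dyadic cube $R$. One direction of (a) is then immediate, since dyadic cubes are boxes. For the other, given a box $[a,b]$ I would approximate $\mathbf{1}_{[a,b]}$ by indicators of finite unions $E_j$ of dyadic cubes with $|[a,b]\triangle E_j|\to0$; then $\|\int_{[a,b]}f-\int_{E_j}f\|_X\le|[a,b]\triangle E_j|^{1/p'}\sup_{f\in M}\|f\|_p\to0$ uniformly in $f\in M$ by H\"older, each $\{\int_{E_j}f:f\in M\}$ is relatively compact (a finite sum of relatively compact sets is relatively compact), and a set uniformly approximated by totally bounded sets is totally bounded. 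Note that condition $(i)$ plays no role here.

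For equivalence (b), from $E(f,A_k)\le\|f-P_k f\|_p\le(1+\|P_k\|)E(f,A_k)\le 2E(f,A_k)$ I would reduce to controlling $\sup_{f\in M}\|f-P_k f\|_p$, and split $f-P_k f=f\mathbf{1}_{Q_k^c}+(\mathbf{1}_{Q_k}f-P_k f)$. Since $Q_k^c\subseteq\{\|x\|\ge 2^k\}$, the first piece is handled by the tightness hypothesis $(i)$; for the second, Jensen's inequality and the substitution $y=x+h$ (with $\|h\|\le\sqrt{d}\,2^{-k}$ on each cube), together with disjointness of the cubes, give
\[\|\mathbf{1}_{Q_k}f-P_k f\|_p^p\ \le\ c_d\sup_{\|h\|\le\sqrt{d}\,2^{-k}}\|f(\cdot+h)-f(\cdot)\|_p^p,\]
with $c_d$ a dimensional constant, so the equicontinuity hypothesis $(iii)$ takes care of it; hence $(i)\wedge(iii)$ gives $\lim_k E(M,A_k)=0$. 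For the converse, $f\mathbf{1}_{Q_k^c}=(f-P_k f)\mathbf{1}_{Q_k^c}$ (because $P_k f$ vanishes off $Q_k$) yields $(i)$ directly, after comparing the Euclidean and sup norms on $\mathbb{R}^d$; and for $(iii)$ one fixes, given $\varepsilon>0$, an index $k$ with $\sup_{f\in M}\|f-P_k f\|_p<\varepsilon/3$ and estimates $\|f(\cdot+h)-f(\cdot)\|_p\le 2\|f-P_k f\|_p+\|(P_k f)(\cdot+h)-P_k f\|_p$, where, $k$ being now frozen, the last term is bounded by a grid-dependent constant (of size $\sim 2^k$) times $\|h\|^{1/p}\sup_{f\in M}\|f\|_p$, since translating a step function on a fixed grid changes it only on a thin boundary layer of measure $O(2^k\|h\|\,|Q_k|)$; this tends to $0$ uniformly over $M$ as $h\to0$.

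The hard part will be bookkeeping rather than any single idea: keeping every estimate uniform over $M$, and, above all, organizing the implication $\lim_k E(M,A_k)=0\Rightarrow(iii)$, where the order of quantifiers is essential — the mesh $2^{-k}$ must be chosen \emph{first}, fine enough that $P_k$ already approximates all of $M$ to within $\varepsilon/3$ in the $L^p$ metric, and only afterwards may $h$ be taken small relative to that now-fixed grid, so that the blow-up constant $\sim 2^k$ is harmless. Secondary nuisances are the elementary geometry relating sup-norm cubes, Euclidean balls and dyadic refinements (used in checking $(A3)$ and in both equivalences), and the need to keep $p$ finite throughout.
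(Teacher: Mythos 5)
Your overall strategy is the one the paper itself points to: the theorem is stated here without proof, as an application of Theorem \ref{comp_dim_inf} taken from Almira--Luther, and your dyadic step-function scheme with conditional-expectation projections $P_k$ (uniformly bounded, $\|P_k\|\le 1$, range $A_k\cong X^{\mathcal{D}_k}$) is exactly the right vehicle. The construction of the scheme, the translation between ``existence of $\beta$'' and ``$E(M,A_k)\to 0$'' for bounded $M$, and the whole of your equivalence (b) --- including the Jensen/translation estimate for $\|\mathbf{1}_{Q_k}f-P_kf\|_p$ and the quantifier order in recovering $(iii)$ --- are sound.

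There is, however, one step that genuinely fails: the nontrivial direction of your equivalence (a), namely ``$P_k(M)$ relatively compact for all $k$ $\Rightarrow$ $(ii)$'', is false for $p=1$, and your H\"older bound signals this: when $p=1$ the exponent $1/p'$ is $0$, so $|[a,b]\triangle E_j|^{1/p'}$ does not tend to $0$ and the uniform approximation of $\{\int_{[a,b]}f\}$ by the totally bounded sets $\{\int_{E_j}f\}$ breaks down. This is not merely a defect of the estimate. Take $d=1$, $X=\ell^2$, $c=1/3$, $s_k=(1-4^{-k})/3$, and let $J_k=[s_{k-1},s_k)$ (a dyadic interval of length $4^{-k}$, the left half of its dyadic parent) with sibling $J_k'=[s_k,s_k+4^{-k})$; put $f_k=4^k e_k(\mathbf{1}_{J_k}-\mathbf{1}_{J_k'})$. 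Then $\|f_k\|_{L^1}=2$, and for every dyadic interval $R$ one has $\int_R f_k=0$ for all but finitely many $k$ (the parent of $J_k$ is eventually either contained in $R$ or disjoint from it), so every $P_j(\{f_k\})$ is relatively compact; yet $\int_{[0,1/3]}f_k=\tfrac23 e_k$ is not relatively compact in $\ell^2$, so $(ii)$ fails. Fortunately the theorem itself is unharmed, because that direction of (a) is never actually needed: in the backward implication you only use the trivial direction (dyadic cubes are boxes), and in the forward implication $(ii)$ follows directly from relative compactness of $M$, since $f\mapsto\int_{[a,b]}f$ is a bounded linear map from $L^p(\mathbb{R}^d,X)$ to $X$ for every $1\le p<\infty$ and continuous images of relatively compact sets are relatively compact. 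You should reroute the forward direction of $(ii)$ through this one-line observation (or restrict the H\"older argument to $1<p<\infty$) rather than assert (a) as a free-standing equivalence.
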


A careful inspection of the proof of Theorem \ref{comp_dim_inf} reveals that the important steps for its arguments are the inequalities $\eqref{nueva}$ and $\eqref{nuevados}$. This directly leads to introduce the following technical concept, and the reformulation of the result below it.
\begin{defi}[$(\beta,q)$-condition] Let $q\in [1,\infty]$ and assume that $\beta=\{b_k\}_{k=0}^\infty$ is a sequence of positive numbers such that $b_0>0$ and $\|\beta\|_{\ell^q}=\infty$. Let $(X,\{A_n\})$ be a linear approximation scheme and let us assume that $M\subset X$. We say that $M$ satisfies the $(\beta,q)$-condition with respect to  $(X,\{A_n\})$, if for every sequence $\{f_m\}\subseteq M$ there exist  $\{a_k\}\subseteq [0,\infty)$ and sequences $\{f_{k,m}\}_{m=1}^\infty\subseteq X$, $k\in\mathbb{N}$, such that
\begin{itemize}
\item[$(i)$] $\{a_n\}\in c_0$
\item[$(ii)$] $\|f_m-f_{k,m}\|_X\leq a_k[ E(f_m,A_k)\|\{b_i\}_{i=0}^k\|_{\ell^q}+1]$ for all $m,k\in\mathbb{N}$.
\item[$(iii)$] For all $k\in\mathbb{N}$, every subsequence of $\{f_{k,m}\}_{m=0}^\infty$ contains a subsequence $\{f_{k,m}\}_{m\in\mathbb{M}_0}$ ($\mathbb{M}_0$ been an infinite subset of $\mathbb{N}$) such that
\[
\|f_{k,s}-f_{k,t}\|_X\leq a_k[ (E(f_s,A_k)+E(f_t,A_k))\|\{b_i\}_{i=0}^k\|_{\ell^q}+1] \text{ for all } t,s\in\mathbb{M}_0\setminus [0,m_0(\varepsilon)),
\]
where $\varepsilon_0>0$ is arbitrarily small and $m_0=m_0(\varepsilon)$ may depend on $\varepsilon$.
\end{itemize}
\end{defi}

\begin{theorem} \label{comp_dim_inf_gen}
Let $(X,\{A_k\})$ be a linear approximation scheme and let $q\in [1,\infty ]$ be fixed.  The following are equivalent statements:
\begin{itemize}
\item[$(i)$] $M$ is relatively compact in $X$.
\item[$(ii)$] There exists $\beta=\{b_k\}\subseteq [0,\infty)$ such that $\|\beta\|_{\ell^q}=+\infty$, $b_0>0$ and $M$ is a bounded subset of  $A(X,\{A_k\},\ell^q(\beta))$ which satisfies the $(\beta,q)$-condition with respect to  $(X,\{A_n\})$.
\end{itemize}
\end{theorem}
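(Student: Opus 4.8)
The key observation is that, in the proof of Theorem~\ref{comp_dim_inf}, the projection hypothesis was used \emph{only} to produce the inequalities \eqref{nueva} and \eqref{nuevados} (hence \eqref{nuevatres}), and these are, up to notation, precisely clauses $(ii)$ and $(iii)$ of the $(\beta,q)$-condition. So the plan is: for $(i)\Rightarrow(ii)$ one checks that a relatively compact set trivially satisfies the $(\beta,q)$-condition, and for $(ii)\Rightarrow(i)$ one re-runs the argument of Theorem~\ref{comp_dim_inf} with the projections deleted and the $(\beta,q)$-condition inserted in their place.

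For $(i)\Rightarrow(ii)$ I would first invoke the implication $(i)\Rightarrow(ii)$ of Theorem~\ref{timan}, valid for arbitrary approximation schemes, to get $\{E(M,A_k)\}\searrow 0$; then, repeating the construction in the proof of $(i)\Rightarrow(ii)$ of Theorem~\ref{comp_dim_fin}, I would build from the null sequence $\alpha_k=E(M,A_k)$ a weight $\beta=\{b_k\}$ with $\|\beta\|_{\ell^q}=\infty$ and $M$ bounded in $A(X,\{A_k\},\ell^q(\beta))$, with a harmless adjustment of the first term so that $b_0>0$. To verify the $(\beta,q)$-condition, given $\{f_m\}\subseteq M$ I would simply take $f_{k,m}=f_m$ and $a_k=\tfrac1{k+1}$: then $\{a_k\}\in c_0$, clause $(ii)$ holds because its left-hand side vanishes, and clause $(iii)$ holds because its right-hand side is always $\ge\varepsilon$ while $\{f_m\}$, sitting inside a relatively compact set, admits along a suitable infinite index set tails of diameter $\le\varepsilon$.

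For $(ii)\Rightarrow(i)$, given $\{f_m\}\subseteq M$ I would apply the $(\beta,q)$-condition to obtain $\{a_k\}\in c_0$ and $\{f_{k,m}\}$ satisfying $(ii)$--$(iii)$, set $c_k=\|\{b_i\}_{i=0}^k\|_{\ell^q}$, and note that boundedness of $M$ in $A(X,\{A_k\},\ell^q(\beta))$ together with $E(f_m,A_i)\ge E(f_m,A_k)$ for $i\le k$ gives $E(f_m,A_k)c_k\le C$ for all $m,k$. Feeding clauses $(ii)$ and $(iii)$ into the triangle inequality $\|f_t-f_s\|\le\|f_t-f_{k,t}\|+\|f_{k,t}-f_{k,s}\|+\|f_{k,s}-f_s\|$ reproduces \eqref{nuevatres}, namely
\[
\|f_t-f_s\|\le a_k\bigl[\,2\bigl(E(f_s,A_k)+E(f_t,A_k)\bigr)c_k+3\,\bigr]+\varepsilon .
\]
Since $\{a_k\}\in c_0$, I would pick $n_1<n_2<\cdots$ with $a_{n_i}(4C+3)\le2^{-i}$, extract a nested sequence of infinite index sets $\mathbb{M}_1\supseteq\mathbb{M}_2\supseteq\cdots$ (at stage $i$ apply clause $(iii)$ with $k=n_i$, then take $\varepsilon=2^{-i}$ and discard the initial segment $[0,m_0(2^{-i}))$), so that $\|f_t-f_s\|\le2^{-i+1}$ for all $s,t\in\mathbb{M}_i$, and finally choose $m_i\in\mathbb{M}_i$ with $m_1<m_2<\cdots$; the sequence $\{f_{m_i}\}$ is then Cauchy, hence convergent since $X$ is complete, which proves that $M$ is relatively compact.

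The genuinely new content is essentially nil: the only step requiring a moment's care is the nested extraction in $(ii)\Rightarrow(i)$, because clause $(iii)$ yields, for each fixed $k$, a single infinite set on whose $\varepsilon$-dependent tails the estimate holds, so at each stage one must extract first and truncate at $m_0(2^{-i})$ afterwards --- this is exactly where the ``$m_0$ may depend on $\varepsilon$'' freedom is used. Everything else is the arithmetic already carried out in the proof of Theorem~\ref{comp_dim_inf}.
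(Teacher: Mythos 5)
Your proposal is correct and is exactly the argument the paper intends: the paper offers no separate proof of this theorem, presenting it as a direct reformulation of Theorem~\ref{comp_dim_inf} once one observes that the projection hypothesis entered only through the inequalities \eqref{nueva} and \eqref{nuevados}, which is precisely the substitution you carry out. Your verification of the $(\beta,q)$-condition for a relatively compact set (taking $f_{k,m}=f_m$) and your rerun of the nested-extraction argument for the converse match the paper's approach in every essential respect.
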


\section{Generalized approximation schemes and $Q$-compactness}
\subsection{Preliminaries. A few examples}
%\textcolor{red}{\begin{defi}[Approximation Scheme]
%Let $X$ be a Banach space over the field $K$ of real or complex numbers and $N$ be the set of all non-negative integers.  For each $n\in N$, let $Q_n=Q_n(X)$ be %a family of subsets of $X$ satisfying the following conditions:
%\begin{enumerate}[(1)]
%\item $\{0\}=Q_0\subset Q_1\subset \cdots \subset Q_n \subset \dots;$
%\item $\lambda Q_n \subset Q_n$ for ever $n \in N$ and $\lambda \in K;$
%\item $Q_n+Q_m \in Q_{n+m}$ for every $n,m \in N$.
%\end{enumerate}
%Then $Q(X)=(Q_n(X))_{n \in N}$ is called an \emph{approximation scheme} on $X$.  We shall simply use $Q_n$ to denote $Q_n(X)$ if the context is clear.
%\end{defi}

\begin{defi}[Generalized Approximation Scheme]
Let $X$ be a Banach space.  For each $n\in \mathbb{N}$, let $Q_n=Q_n(X)$ be a family of subsets of $X$ satisfying the following conditions:
\begin{itemize}
\item[$(GA1)$] $\{0\}=Q_0\subset Q_1\subset \cdots \subset Q_n \subset \dots$.
\item[$(GA2)$]  $\lambda Q_n \subset Q_n$ for all $n \in N$ and all scalars $\lambda$.
\item[$(GA3)$]  $Q_n+Q_m \subseteq Q_{n+m}$ for every $n,m \in N$.
\end{itemize}
Then $Q(X)=(Q_n(X))_{n \in N}$ is called a \emph{generalized approximation scheme} on $X$.  We shall simply use $Q_n$ to denote $Q_n(X)$ if the context is clear.
\end{defi}
Obviously, there are several important differences between this concept and Definition \ref{ap_sch}  and, in fact, no one of these concepts includes the other one. We use here the term ``generalized'' because the elements of $Q_n$ may be subsets of $X$ (and not just elements of $X$, as it was the case in Definition \ref{ap_sch}).

%\subsection{Examples}

Let us now consider a few important examples of generalized approximation schemes:
\begin{enumerate}[1)]
\item The classical approximation schemes introduced in Pietsch in his seminal paper \cite{Pie}.
\item  $Q_n=$ the set of all at-most-$n$-dimensional subspaces of any given Banach space $X$.
\item Let $E$ be a Banach space and $X=L(E)$; let $Q_n=N_n(E)$, where $N_n(E)=$ the set of all $n$-nuclear maps on $E$. \cite {PieID}
\item Let $a^k=(a_n)^{1+\frac{1}{k}},$ where $(a_n)$ is a nuclear exponent sequence. Then {$Q_n$}  on $X=L(E)$ can be defined as the set of all $\Lambda_\infty (a^k)$-nuclear maps on $E$.\cite{Dubinsky_Ram}
\end{enumerate}
We are now able to introduce $Q$-compact sets and operators:
\begin{defi}[Generalized Kolmogorov Number]
Let $U_X$ be the closed unit ball of $X$,  $Q(X)=(Q_n(X))_{n \in N}$ be a \emph{generalized approximation scheme} on $X$,  and $D$ be a bounded subset of $X$.  Then the $n^{\text{th}}$ \emph{generalized Kolmogorov number} $\delta_n(D;Q)$ of $D$ with respect to $U_X$ is defined by
\begin{equation}
\label{GenKolmogorovNumber}
\delta_n(D;Q)=\inf\{r>0:D \subset rU_X+A \text{ for some }A \in Q_n(X)\}.
\end{equation}
Assume that $Y$ is a Banach space and $T \in L(Y,X)$. The $n^{\text{th}}$ Kolmogorov number $\delta_n(T;Q)$ of $T$ is defined as $\delta_n(T(U_Y);Q)$.
\end{defi}
It follows that $\delta_n(T;Q)$ forms a non-increasing sequence on non-negative numbers:
\begin{equation}
\|T\|=\delta_0(T;Q)\geq \delta_1(T;Q)\geq \cdots \geq \delta_n(T;Q)\geq 0.
\end{equation}
\begin{defi}[$Q$-compact set]
Let $D$ be a bounded subset of $X$. We say that $D$ is $Q$-\emph{compact} if $\displaystyle\lim_n \delta_n(D;Q)=0$.
\end{defi}

\begin{defi}[$Q$-Compact Operator] We say that $T\in L(Y,X)$ is a $Q$-\emph{compact operator} if $\displaystyle\lim_n \delta_n(T;Q)=0$, i.e., $T(U_Y)$ is a $Q$-compact set.
\end{defi}

\begin{rem} If $Q=\{A_n\}_{n=0}^\infty$ is a classical approximation scheme, then $A\in A_n$ means that $A=A_n$, so that, for any set $D\subseteq X$, $\delta_n(D;\{A_n\})=E(D,A_n)$, since
\begin{eqnarray*}
\delta_n(D;\{A_n\}) &=& \inf\{r:D\subseteq rU_X+A_n\}\\
 &=& \inf\{r:E(x,A_n)\leq r \text{ for all }x\in D\}\\
 &=& \sup_{x\in D}E(x,A_n)=E(D,A_n).
\end{eqnarray*}
Hence, in this case $D\subseteq X$ is $\{A_n\}$-compact if and only if $\{E(D,A_n)\}\searrow 0$ and Theorem $\ref{timan}$ states that, if $A_n$ is boundedly compact for all $n$, then $D\subseteq X$ is relatively compact in $X$ if and only if it is bounded in $X$ and $\{A_n\}$-compact. Indeed, all theorems in Section $2$ of the paper are also results about $Q$-compact sets or operators. For example, Corollary \ref{A_nCompatness}  characterizes $\{A_n\}$-compactness of subsets of $X$ whenever $\{A_n\}$ is a boundedly compact approximation scheme on $X$.
\end{rem}

\begin{prop}
Let $Q=\{Q_n(X)\}$ be a generalized approximation scheme on $X$ and assume that all elements $A\in Q_n$ are cones (i.e., $\lambda A\subseteq A$ for all scalar $\lambda$), for $n=1,2,\cdots$. If $X$ is separable and $\{\delta_n(\{x\};Q)\}\searrow 0$ for all $x\in X$, then all relatively compact subsets of $X$ are $Q$-compact sets.
\end{prop}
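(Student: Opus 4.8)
The plan is to mimic the classical argument that relatively compact sets admit finite $\varepsilon$-nets, and then combine such a net with the pointwise $Q$-approximation hypothesis, using the cone property of the elements of $Q_n$ to absorb the finitely many approximants into a single set $A\in Q_N$. First I would fix a relatively compact set $M\subseteq X$ and $\varepsilon>0$, and choose a finite $\varepsilon$-net $\{x_1,\dots,x_s\}\subseteq X$ for $M$ (Hausdorff's theorem). By the hypothesis $\{\delta_n(\{x_k\};Q)\}\searrow 0$, for each $k$ there is $n_k$ with $\delta_{n_k}(\{x_k\};Q)<\varepsilon$, so there exist $r_k<\varepsilon$ and $A_k\in Q_{n_k}$ with $x_k\in r_kU_X+A_k$; that is, $x_k=a_k+y_k$ with $a_k\in A_k$ and $\|y_k\|<\varepsilon$. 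Put $N=n_1+\cdots+n_s$. By $(GA1)$ each $A_k\subseteq Q_N$ — more precisely $A_k\in Q_{n_k}\subseteq Q_N$ — so each $a_k$ lies in some member of $Q_N$; I would then use $(GA3)$ to find $A\in Q_N$ containing $a_1,\dots,a_s$ simultaneously (e.g. take $A=A_1+\cdots+A_s\in Q_{n_1+\cdots+n_s}=Q_N$, so that $a_k=0+\cdots+a_k+\cdots+0\in A$ for every $k$, using $0\in A_j$ from $(GA2)$).

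Next I would show $M\subseteq 2\varepsilon\,U_X+A$, which gives $\delta_N(M;Q)\le 2\varepsilon$ and hence, $\varepsilon$ being arbitrary, $\lim_n\delta_n(M;Q)=0$, i.e. $M$ is $Q$-compact. To see the inclusion: take $x\in M$; there is $k$ with $\|x-x_k\|\le\varepsilon$, and $x_k=a_k+y_k$ with $a_k\in A$ and $\|y_k\|<\varepsilon$, so $x=a_k+(x-x_k+y_k)$ with $a_k\in A$ and $\|x-x_k+y_k\|<2\varepsilon$. This is exactly the form $x\in 2\varepsilon\,U_X+A$ with $A\in Q_N$. One should also note that $M$ is bounded (a relatively compact set is bounded), which is needed for $\delta_n(M;Q)$ to be defined at all.

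The one place where the cone hypothesis is genuinely used is implicit in the step "$a_k\in A_k$ and we may form $A=\sum A_k$": actually $(GA3)$ alone suffices for the sum to be a member of $Q_N$, and $(GA2)$ gives $0\in A_j$ so each $a_k$ lands in $A$. So in fact the cone hypothesis on the individual $A\in Q_n$ is not strictly needed for this inclusion argument — $(GA1)$–$(GA3)$ already guarantee it. (If one prefers not to invoke $0\in A_j$, the cone property $\lambda A_j\subseteq A_j$ with $\lambda=0$ yields $0\in A_j$ directly, which is presumably why the authors state it.) The main obstacle I anticipate is purely bookkeeping: tracking the indices $n_k$ and ensuring the single set $A$ belongs to $Q_N$ for a fixed finite $N$ depending only on $\varepsilon$ (not on $x\in M$), so that the bound $\delta_N(M;Q)\le 2\varepsilon$ is uniform over $M$. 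Separability of $X$ enters only through Hausdorff's theorem, to guarantee the finite $\varepsilon$-net exists (alternatively one quotes that relatively compact sets are totally bounded, which needs no separability — but keeping the hypothesis as stated is harmless).
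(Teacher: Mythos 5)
Your proof is correct, but it takes a genuinely different and more direct route than the paper's. The paper uses separability to pick a countable dense set $\{x_n\}$, selects near-best approximants $a_{n,m}\in A_{n,m}\in Q_m$ to each $x_n$, and builds from them a classical \emph{linear} approximation scheme $B_N=\mathbf{span}\,\{a_{n,m}\}_{n,m=1}^N$; the cone hypothesis is then used in an essential way to show that the whole span $B_N$ sits inside a single member $\widetilde{A}_{K(N)}\in Q_{K(N)}$ (each $\lambda_{n,m}a_{n,m}$ stays in $A_{n,m}$), after which the claim reduces, via a subsequence argument, to the fact that $E(M,B_n)\searrow 0$ for relatively compact $M$. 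You instead work directly with a finite $\varepsilon$-net of the given set $M$, approximate only the finitely many net points, and absorb their approximants into one set $A=A_1+\cdots+A_s\in Q_{n_1+\cdots+n_s}$ via $(GA3)$. This buys you two things the paper's argument does not: separability of $X$ becomes superfluous (total boundedness of $M$ suffices, as you note), and the cone hypothesis is used only to get $0\in A_j$ rather than to contain a linear span. One small correction: $(GA2)$ does \emph{not} give $0\in A_j$ --- it says $\lambda Q_n\subseteq Q_n$ as families, i.e.\ $\lambda A\in Q_n$ whenever $A\in Q_n$, not $\lambda A\subseteq A$; so for the step $a_k=0+\cdots+a_k+\cdots+0\in A$ you genuinely need the cone hypothesis of the proposition (with $\lambda=0$ applied to the nonempty sets $A_j$), exactly as you observe in your parenthetical fallback. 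With that attribution fixed, the argument is complete: $\delta_N(M;Q)\le 2\varepsilon$ for an $N$ depending only on $\varepsilon$, and monotonicity of $\{\delta_n(M;Q)\}$ finishes the proof.
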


\begin{proof}
Let $\{x_n\}_{n=0}^\infty$ be a countable dense  subset of $X$. For each $n,m\in \mathbb{N}$, we take $A_{n,m}\in Q_m$ and $a_{n,m}\in A_{n,m}$ such that
\[
\|x_n-a_{n,m}\|\leq 2E(x_n,A_{n,m})\leq 3\delta_{m}(\{x_n\},Q).
\]
Then $\{a_{n,m}\}_{n,m\in\mathbb{N}}$ is dense in $X$ since $\lim_{m\to\infty}\delta_{m}(\{x_n\},Q)\to 0$ for all $n\in\mathbb{N}$ and $\{x_n\}$ is dense in $X$. It follows that
$\bigcup_{N=0}^\infty B_N$ is dense in $X$, where $B_N=\mathbf{span} \{a_{n,m}\}_{n,m=1}^N$ is a linear subspace of $X$ for all $N$. This obviously implies that, taking $B_0=\{0\}$, the family  $\{B_n\}_{n=0}^\infty$ is a linear approximation scheme of $X$.
On the other hand, it follows from $(GA3)$ that, for each $N$, there exists $K(N)\geq N$ and $\widetilde{A}_{K(N)}\in Q_{K(N)}$ such that
$A_{1,1}+A_{1,2}+\cdots+A_{N,N}\subseteq \widetilde{A}_{K(N)}$. Furthermore, this implies that $B_N\subseteq \widetilde{A}_{K(N)}$ since the sets $A_{n,m}$ are cones. Hence
\begin{equation} \label{CIQ}
\delta_{K(N)}(M;Q)\leq E(M,\widetilde{A}_{K(N)})\leq E(M,B_N), \text{ for all } M\subseteq X\text{ and all } N\in\mathbb{N}.
\end{equation}
We claim that if $M$ is relatively compact in $X$, then $\{E(M,B_n)\}\searrow 0$.  To prove this result, let us assume that the contrary is true. Then there exist $\{y_n\}_{n=1}^\infty \subset M$ and  $c>0$ such that $E(y_n,B_n)>c$ for all $n$. The relative compactness of $M$ implies that there exists a subsequence $\{y_{n_k}\}_{k=1}^\infty$ and $y\in X$ such that $\lim_{k\to\infty}\|y_{n_k}-y\|=0$. Hence
\[
E(y_{n_k},B_{n_k})\leq E(y_{n_k}-y,B_{n_k})+E(y,B_{n_k})\leq \|y_{n_k}-y\|+E(y,B_{n_k})\to 0 (\text{ for } k\to\infty),
\]
which contradicts $c<E(y_{n_k},B_{n_k})$, $k=1,2,\cdots$. It follows that  $\{E(M,B_n)\}\searrow 0$ and the inequalities $\eqref{CIQ}$ imply that $M$ is $Q$-compact.
\end{proof}

\subsection{$Q$-Compactness Does Not Imply Compactness}
In this section we show that in $L_p[0,1],2\leq p\leq \infty$, with a suitably defined approximation scheme, we can find a $Q$-compact map which is not compact.
\par
Let $[r_n]$ be the space spanned by the Rademacher functions.  It can be seen from the Khinchin Inequality that
\begin{equation}
\ell^2 \approx [r_n]\subset L_p[0,1] \text{ for all }1\leq p \leq \infty.
\end{equation}
We define an approximation scheme $A_n$ on $L_p[0,1]$ as follows:
\begin{equation}
A_n=\{f\in L_p[0,1]: f \in L_{p+\frac{1}{n}}\} \text{ or simply } A_n=L_{p+\frac{1}{n}}.
\end{equation}
$L_{p+\frac{1}{n}}\subset L_{p+\frac{1}{n+1}}$ gives us $A_n\subset A_{n+1}$. for $n=1,2,\dots,$ and it is easily seen that $A_n+A_m \subset A_{n+m}$ for $n,m=1,2,\dots,$ and that $\lambda A_n \subset A_n$.  Thus $\{A_n\}$ is an approximation scheme in the sense of Pietsch.
\par
Next we observe the existence of a projection
\begin{equation}
\underline{P}:L_p[0,1]\to R_p \text{ for }p\geq 2,
\end{equation}
where $R_p$ denotes the closure of the span of $\{r_n(t)\}$ in $L_p[0,1]$.  We know that for $p\geq 2$, $L_p[0,1]\subset L_2[0,1]$.  Now $R_2$ is a closed subspace of $L_2[0,1]$ and $\underline{P}_2:L_2[0,1]\to R_2$ is an orthogonal projection onto $R_2$.  Then $\underline{P}=j\circ \underline{P}_2\circ i$, where $i,j$ are isomorphisms shown in the Figure %\ref{fig:ij}, is clearly a projection.
\begin{figure}[H]
\label{fig:ij}
\centering
\begin{tikzpicture}[
    path/.style={thick},
    every node/.style={color=black}
    ]
       % Lines
    \draw[path,->] (-1,1) -- (1,1) node[right] {$L_2$}node[midway,above]{$i$};
    \draw[path,->] (1.3,.7)--(1.3,-.7) node[midway,right]{$\underline{P}_2$};
    \draw[path,<-] (-1,-1) -- (1,-1)node[right] {$R_2$}node[midway,below]{$j$};
    \draw[path,->] (-1.3,.7)--(-1.3,-.7)node[midway,left]{$\underline{P}$}node[below]{$R_p$};
    \node at (-1.3,1){$L_p$};
\end{tikzpicture}
\end{figure}
\begin{prop}
For $p\geq 2$ the projection $\underline{P}:L_p[0,1]\to R_p$ is $Q$-compact but not compact.
\end{prop}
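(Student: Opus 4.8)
The plan is to establish two separate facts: first, that $\underline{P}$ fails to be compact, and second, that it is nonetheless $Q$-compact for the scheme $A_n = L_{p+1/n}$.

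For the failure of compactness, I would argue that $\underline{P}(U_{L_p})$ contains (up to a fixed scalar) the unit ball of $R_p$, which is an infinite-dimensional closed subspace of $L_p[0,1]$ — indeed $R_p \approx \ell^2$ via the Khinchin inequality, with the Rademacher functions $\{r_n\}$ forming an unconditional basic sequence. Since $\underline{P}$ restricted to $R_p$ is (a multiple of) the identity, $\underline{P}(U_{L_p}) \supseteq c\, U_{R_p}$ for some $c>0$; an infinite-dimensional normed space never has a relatively compact unit ball (Riesz's lemma), so $\underline{P}(U_{L_p})$ is not relatively compact and $\underline{P} \notin \mathcal{K}(L_p, L_p)$.

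For $Q$-compactness I must show $\delta_n(\underline{P}; Q) = \delta_n(\underline{P}(U_{L_p}); Q) \to 0$, i.e. that for every $\varepsilon>0$ there is an $n$ and a set $A \in Q_n = L_{p+1/n}$ with $\underline{P}(U_{L_p}) \subseteq \varepsilon\, U_{L_p} + A$. The key observation is that the Rademacher span sits inside \emph{every} $L_q$: by Khinchin's inequality, for $f = \sum a_k r_k \in R_p$ one has $\|f\|_{L_q} \le C_q \big(\sum |a_k|^2\big)^{1/2}$ for all finite $q$, with the constants $C_q$ growing but finite. Hence $R_p \subseteq L_{p+1/n}$ for every $n$, and in fact the inclusion $R_p \hookrightarrow L_{p+1/n}$ has norm at most some constant $M$ independent of $n$ (one can bound $C_{p+1/n} \le C_{p+1}$ for $n\ge 1$). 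Therefore $\underline{P}(U_{L_p}) \subseteq \|\underline{P}\|\, U_{R_p} \subseteq \|\underline{P}\|\, M\, (U_{L_p} \cap L_{p+1/n})$, which already lies in $L_{p+1/n} = A_n$; taking $A = \underline{P}(U_{L_p})$ itself (which is a bounded subset of $L_{p+1/n}$, and is a cone after we note $\lambda A_n \subseteq A_n$) gives $\underline{P}(U_{L_p}) \subseteq 0\cdot U_{L_p} + A$ with $A\in Q_n$, so in fact $\delta_n(\underline{P};Q) = 0$ for all $n\ge 1$. Thus $\underline{P}$ is trivially $Q$-compact.

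I expect the main subtlety to be the careful justification that $A := \underline{P}(U_{L_p})$ genuinely belongs to the family $Q_n = \{$bounded subsets of $L_{p+1/n}\}$ — i.e. that every element of $\underline{P}(U_{L_p})$, being a finite-or-infinite Rademacher series with $\ell^2$ coefficients, actually lies in $L_{p+1/n}$ with uniformly controlled norm. This is exactly where Khinchin's inequality does the work, and where one must be slightly attentive that $A_n$ was defined as $L_{p+1/n}$ as a \emph{set} (so membership, not a norm bound, is what $Q_n$ records); once that is pinned down, the containment $\underline{P}(U_{L_p}) \subseteq A \in Q_n$ is immediate and gives $\delta_n = 0$. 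A secondary point worth a sentence is that $p$ must be $\ge 2$ precisely so that the orthogonal projection $\underline{P}_2$ on $L_2$ can be transported to $L_p$ via the inclusion $L_p \subseteq L_2$ as in the displayed diagram; for $p<2$ this construction is unavailable, which is why the hypothesis $2 \le p \le \infty$ appears.
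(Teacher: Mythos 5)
Your proof is correct, and its two halves compare differently with the paper's. For $Q$-compactness you follow essentially the paper's route: Khinchin's inequality puts every $\ell^2$-coefficient Rademacher series in $L_q$ for all finite $q$, so $\underline{P}(U_{L_p})\subseteq \|\underline{P}\|\,U_{R_p}\subseteq R_p\subseteq L_{p+\frac1n}=A_n$ and the Kolmogorov numbers vanish; the paper states only $\delta_n(\underline{P};Q)\to 0$, while you correctly note the sharper $\delta_n(\underline{P};Q)=0$ for $n\geq 1$. One small adjustment: since $\{A_n\}$ here is a \emph{classical} (Pietsch) approximation scheme, the paper's Remark says $A\in Q_n$ should be read as $A=A_n=L_{p+1/n}$, so the natural witness is $A=L_{p+1/n}$ itself rather than $A=\underline{P}(U_{L_p})$ (which need not be a member of $Q_n$ under that reading); with $A=A_n$ the containment $\underline{P}(U_{L_p})\subseteq A_n$ gives $\delta_n=E(\underline{P}(U_{L_p}),A_n)=0$ immediately, and your worry about whether the image is a cone or uniformly normed in $L_{p+1/n}$ evaporates. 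For non-compactness you genuinely diverge from the paper: the authors invoke Riesz theory ($I-\underline{P}$ has infinite-dimensional kernel $R_p$, hence is not Fredholm, hence $\underline{P}$ is not a Riesz operator, hence not compact), whereas you observe directly that $\underline{P}$ fixes $R_p$, so $\underline{P}(U_{L_p})\supseteq U_{R_p}$, the unit ball of an infinite-dimensional closed subspace, which is not relatively compact by Riesz's lemma. Your argument is more elementary and self-contained (it needs no operator-ideal machinery), while the paper's phrasing fits its broader theme of relating $Q$-compactness to the Riesz--Schauder theory; both are valid. Your closing remark about why $p\geq 2$ is needed (to transport the orthogonal projection from $L_2$ via $L_p\subseteq L_2$) is also accurate.
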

\begin{proof}
Let $U_{R_p},U_{L_p}$ denote the closed unit balls of $R_p$ and $L_p$ respectively.  It is easily seen that $\underline{P}(U_{L_p})\subset \|\underline{P}\|U_{R_p}$.  But $U_{R_p} \subset CU_{R_{P+\frac{1}{n}}}$ where $C$ is a constant follows from the Khinchin inequality.  Therefore, $\underline{P}(U_{L_p})\subset L_{p+\frac{1}{n}}$, which gives $\delta_n(P,Q)\to 0$.  To see that $\underline{P}$ is not a compact operator, observe that dim$R_p=\infty$ and $I-\underline{P}$ is projection with kernel $R_p$, so $I-\underline{P}$ is not a Fredholm operator.  Therefore $\underline{P}$ is not a Riesz operator, but every compact operator is a Riesz operator. So $\underline{P}$ cannot be a compact operator.
\end{proof}

\begin{rem}
Another example which proves that $Q$-compactness does not imply compactness: Take $X=H$ a Hilbert space, $Y\subset H$ an infinite dimensional closed subspace such that $\dim H/Y=\infty$, $D=U_Y$ and $T=P_Y:H\to H$ the orthogonal projection of $H$ onto  $Y$. Take $\{A_n\}$ any nontrivial linear approximation scheme on $H$ such that $A_0=\{0\}$ and $A_1=Y$.  Then $D=T(U_X)=U_Y$ is not relatively compact in $X$ (so that $T$ is not a compact operator) and $\delta_n(T,\{A_n\})=\delta_n(D;\{A_n\})=E(D,A_n)=0$ for all $n\geq 1$, so that $T$ and $D$ are $\{A_n\}$-compact.
\end{rem}

\subsection{Properties of $Q$-Compact Maps}
%For a given approximation scheme $Q_n$ on $X$ we shall define a continuous linear map $T \in L(X)$ to be $Q$-compact if $T(U_X)$ is $Q$-compact in $X$ or %equivalently if $\displaystyle \lim_n \delta_n(T(U_X);Q)=\lim_n\delta_n(T;Q)=0$.
%\par
Let $\mathcal{A}$ be the ideal defined as
\begin{equation}
\mathcal{A}=\{T \in L(X): \delta_n(T;Q) \to 0 \text{ as } n\to\infty\},
\end{equation}
and let $\mathcal{A}^s$ denote the surjective hull of $\mathcal{A}$, which is defined by
\begin{equation}
\mathcal{A}^s=\{T \in L(X):\delta_n(TQ_{E^1};Q) \to 0 \text{ as } n\to\infty\}.
\end{equation}
where $Q_{E^1}$ is a surjection of $\ell_I^1$ with $Q_{E^1}(U_{\ell_I^1})=U_X$.
\begin{prop}
\mbox{}
\begin{enumerate}[i)]
\item $Q$-compact maps have separable range;
\item the uniform limit of $Q$-compact maps is $Q$-compact;
\item an ideal of $Q$-compact maps is equal to its surjective hull, i.e. $\mathcal{A}=\mathcal{A}^s$.
\end{enumerate}
\end{prop}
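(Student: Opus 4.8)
The three claims are independent, and I would derive each one directly from the definition of the generalized Kolmogorov numbers.

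For (i), let $T\in L(Y,X)$ satisfy $\delta_n(T;Q)\to0$. Choose $\varepsilon_n\downarrow0$ with $\varepsilon_n>\delta_n(T;Q)$ and, for each $n$, a set $A_n\in Q_n(X)$ with $T(U_Y)\subseteq\varepsilon_nU_X+A_n$. At this point I would invoke the hypothesis --- which I would assume throughout, since it holds in the main examples and rules out the degenerate schemes mentioned below --- that the members of each $Q_n(X)$ are separable subsets of $X$, so that one may fix a countable dense set $D_n\subseteq A_n$. The plan is then to check that the countable set $D=\bigcup_n D_n$ satisfies $T(U_Y)\subseteq\overline{D}$: given $y\in U_Y$, pick $a_n\in A_n$ with $\|Ty-a_n\|\le\varepsilon_n$ and then $d_n\in D_n$ with $\|a_n-d_n\|\le\varepsilon_n$, whence $\|Ty-d_n\|\le2\varepsilon_n\to0$. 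Since the range of $T$ equals $\bigcup_{m\ge1}mT(U_Y)$, it is contained in $\bigcup_{m\ge1}\overline{mD}$, a countable union of separable sets, and hence separable.

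For (ii), the only elementary fact I need is: \emph{if $D\subseteq D'+\rho U_X$ then $\delta_n(D;Q)\le\delta_n(D';Q)+\rho$ for all $n$} (from $D'\subseteq rU_X+A$ with $A\in Q_n$ one gets $D\subseteq(r+\rho)U_X+A$, and one takes the infimum over admissible $r$). If now $T_k\to T$ in operator norm with each $T_k$ $Q$-compact, applying this to $T(U_Y)\subseteq T_k(U_Y)+\|T-T_k\|U_X$ gives $\delta_n(T;Q)\le\delta_n(T_k;Q)+\|T-T_k\|$ for all $n,k$. Given $\varepsilon>0$ I would first fix $k$ with $\|T-T_k\|<\varepsilon/2$ and then $N$ with $\delta_n(T_k;Q)<\varepsilon/2$ for $n\ge N$; this yields $\delta_n(T;Q)<\varepsilon$ for $n\ge N$, so $T$ is $Q$-compact.

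For (iii), $\mathcal{A}\subseteq\mathcal{A}^s$ always holds, and the reverse inclusion is immediate in this setting: since $Q_{E^1}$ is a surjection with $Q_{E^1}(U_{\ell_I^1})=U_X$, for every $T\in L(X)$ we have $(TQ_{E^1})(U_{\ell_I^1})=T(U_X)$, hence $\delta_n(TQ_{E^1};Q)=\delta_n(T;Q)$ for every $n$, so $T\in\mathcal{A}$ if and only if $T\in\mathcal{A}^s$. The step I expect to cost the most is (i): unlike (ii) and (iii) it does not follow formally from (GA1)--(GA3) --- for instance, for the admissible scheme $Q_0=\{0\}$, $Q_n=\{X\}$ ($n\ge1$) every bounded subset of $X$ is $Q$-compact while $X$ may fail to be separable --- so the crux is to make sure that separability of the members of the scheme (or a suitable weaker condition) is genuinely available, after which the argument above applies verbatim.
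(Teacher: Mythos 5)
Your proposal is correct and takes essentially the same route as the paper: for (ii) you use the same subadditivity estimate $\delta_n(T;Q)\le\|T-T_k\|+\delta_n(T_k;Q)$, for (iii) the same observation that $Q_{E^1}(U_{\ell_I^1})=U_X$ forces $\delta_n(TQ_{E^1};Q)=\delta_n(T;Q)$, and for (i) the paper merely writes ``follows from the definition,'' implicitly relying on the separability of the members of each $Q_n$ that it states explicitly a little later. Your identification of that separability hypothesis as genuinely necessary (with the counterexample $Q_n=\{X\}$ for $n\ge1$) and your detailed $2\varepsilon_n$-approximation argument simply fill in what the paper leaves unsaid.
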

\begin{proof}
i) Follows from the definition.  For ii) we first observe that $\delta_0(T;Q\leq \|T\|$.  Now suppose $(T_n)$ is a sequence of $Q$-compact maps, and let $T=\displaystyle \lim_{n}T_n$.  Then
\begin{gather}
\delta_n(T;Q)=\delta_n(T-T_n+T_n;Q)\leq \delta_0(T-T_n;Q)+\delta_n(T_n;Q)\notag\\
\leq \|T-T_n\|+\delta_n(T_n;Q)
\end{gather}
which gives that $T$ is $Q$-compact too.
\par
For iii), $\mathcal{A} \subset \mathcal{A}^c$ follows from the fact that
\begin{equation}
\delta_n(TQ_{E^1};Q)\leq \delta_n(T;Q)\|Q_{E^1}\|=\delta_n(T;Q);
\end{equation}
on the other hand
\begin{equation}
\delta_n(TQ_{E^1};Q)\leq \delta_n(TQ_{E^1}(U_{\ell_I^1});Q)=\delta_n(T;Q);
\end{equation}
gives the equality readily.
\end{proof}
%%%%%%%%%%%%%%%%%%%%%%%%%%%%
\begin{rem}
 Let $T$ be a linear mapping from a Banach space $X$ into a Banach space $Y$. According a classical theorem of Schauder (\cite {Dun-Sch}, p.$485$) an operator $T\in L(X,Y)$  is compact if and only if its adjoint $T^* \in L(Y^*, X^*)$ is compact. Using Schauder theorem Terzio\~{g}lu \cite{Terzioglu} gave  a representation theorem for compact maps. He proves that $T\in L(X,Y)$ is compact if and only if there is a sequence $(u_n)$ of continuous linear functionals on $X$ with $\displaystyle\lim_{n} || u_n|| =0$ such that the inequality $$ ||Tx|| \leq \sup_n |<u_n, x>|$$ holds for every $x\in X$. In general Schauder type of theorem need not be true for $Q$-compact maps. However a result analogous to Terzio\~{g}lu's can be proved for $Q$-compact maps if one assumes both $T$ and $T^*$ are $Q$-compact. For details see \cite{Ak0}.
\end{rem}

%%%%%%%%%%%%%%%%%%%%%
\subsection{Q-Compact Sets}
%\begin{defi}[$Q$-Compact Set]
%A bounded subset $D$ of $X$ is said to be a $Q$\emph{-compact set} if $\displaystyle \lim_n\delta_n(D;Q)=0$.
%\end{defi}
%\begin{defi}{$Q$-Compact Set}
%Let $X$ be a Banach space.  A bounded subset $D$ of $X$ is said to be $Q$\emph{-compact} if $\delta_n(D;Q)\to 0 \ (n\to\infty)$.
%\end{defi}
We assume each $A_n \in Q_n(n \in N)$ is separable.  It is immediate from the definitions that $Q$-compact sets are separable and $Q$-compact maps have separable range.
\begin{defi}[Order-$c_0$-sequence]
A double sequence $\{x_{n,k}\}_{n,k\in \mathbb{N}} \subset X$ is said to be an $\emph{order-$c_0$-sequence}$ if the following hold:
\begin{enumerate}[(1)]
\item for every $n \in \mathbb{N}$ there exists an $A_n \in Q_n$ such that $\{x_{n,k}\}_{k=0}^{\infty} \subset A_n$;
\item $\|x_{n,k}\|\to 0$ as $n\to \infty$ uniformly in $k$.
\end{enumerate}
\end{defi}
\begin{theorem}
\label{thm:qcompact}
Suppose $(X,Q_n)$ is a generalized approximation scheme with sets $A_n \in Q_n$ assumed to be solid (i.e, $tA_n \subset A_n$ for all $t \in [0,1]$).  Then a bounded subset $D$ of $X$ is $Q$-compact if and only if there exists an order-$c_0$-sequence $\{x_{n,k}\}_{k=0}^{\infty} \subset X$ such that
\begin{equation}
D\subset \left\{\sum_{n=1}^\infty \lambda_nx_{n,k(n)}: \text{ } \{k(n)\}_{n=0}^\infty \subseteq \mathbb{N}\text{ and } \sum_{n=1}^\infty|\lambda_n|\leq 1\right\}.
\end{equation}
\end{theorem}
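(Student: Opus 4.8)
The plan is to prove both directions by unwinding the definitions of $\delta_n(D;Q)$ and of an order-$c_0$-sequence, with the sufficiency direction being the substantive one. For the \emph{necessity} direction ($D$ $Q$-compact $\Rightarrow$ existence of an order-$c_0$-sequence with $D$ contained in the prescribed convex hull), I would proceed as follows. Since $\lim_n\delta_n(D;Q)=0$, pick an increasing sequence of indices and, for each $n$, a set $A_n\in Q_n$ and a radius $r_n\to 0$ with $D\subseteq r_nU_X+A_n$; after passing to a subsequence we may assume $\sum_n r_n<\infty$ (and in fact arrange $r_n$ to decay geometrically, say $r_n\le 2^{-n}$). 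Fix $x\in D$. Write $x=a_1+e_1$ with $a_1\in A_1$, $\|e_1\|\le r_1$; then apply the covering at level $2$ to $e_1/r_1\cdot(\text{scaling})$—more carefully, use that $D$ is bounded and iterate: having produced a partial sum, the remainder lies in a bounded set, and one covers it using $A_n$ at the next scale. This yields a telescoping representation $x=\sum_n c_n$ with $c_n\in A_n$ and $\|c_n\|$ controlled by $r_{n-1}+r_n$, hence summable and tending to $0$ uniformly over $x\in D$. The points $c_n=c_n(x)$, as $x$ ranges over a countable dense subset of $D$ (recall $D$ is separable since each $A_n$ is), enumerated over $k$, give the double sequence $\{x_{n,k}\}$; solidity of the $A_n$ lets us rescale so that the coefficients $\lambda_n$ can be pulled out with $\sum|\lambda_n|\le 1$. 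The main technical care here is bookkeeping: ensuring the same index set works for all $x$ simultaneously and that condition (2) (uniform-in-$k$ decay) is genuinely uniform, which is where the geometric decay of $r_n$ is used.

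For the \emph{sufficiency} direction, suppose such an order-$c_0$-sequence exists and $D$ is contained in the displayed set $S=\{\sum_n\lambda_n x_{n,k(n)}:\sum|\lambda_n|\le 1\}$. I want to show $\delta_m(D;Q)\to 0$. Fix $\varepsilon>0$. By condition (2), choose $N$ so that $\|x_{n,k}\|\le\varepsilon$ for all $n\ge N$ and all $k$. Given any element $s=\sum_{n\ge 1}\lambda_n x_{n,k(n)}\in S$, split it as $s=\sum_{n<N}\lambda_n x_{n,k(n)}+\sum_{n\ge N}\lambda_n x_{n,k(n)}$. The tail has norm $\le\sum_{n\ge N}|\lambda_n|\varepsilon\le\varepsilon$. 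For the head: each $x_{n,k(n)}$ lies in $A_n\in Q_n$, and since the $A_n$ are cones (in fact solid, and $\lambda A_n\subset A_n$ for scalars by $(GA2)$ at the level of $Q_n$—here I use that the individual members are balanced/solid as hypothesized), $\lambda_n x_{n,k(n)}\in A_n$, so by $(GA3)$ the sum $\sum_{n<N}\lambda_n x_{n,k(n)}\in A_1+\dots+A_{N-1}\subseteq A_{1+2+\dots+(N-1)}=:A_{M}\in Q_M$ where $M=\binom{N}{2}$. Hence every $s\in S$ lies within $\varepsilon$ of the fixed set $A_M\in Q_M$, i.e. $D\subseteq S\subseteq \varepsilon U_X+A_M$, so $\delta_M(D;Q)\le\varepsilon$. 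Since $\delta_m$ is non-increasing in $m$, this gives $\limsup_m\delta_m(D;Q)\le\varepsilon$ for every $\varepsilon>0$, hence $D$ is $Q$-compact.

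The step I expect to be the main obstacle is the \emph{construction of the order-$c_0$-sequence in the necessity direction}, specifically making the decomposition $x=\sum_n c_n(x)$ depend on $x$ in a way that produces only \emph{countably many} vectors $x_{n,k}$ at each level $n$ while still covering all of $D$ up to the convex-hull operation. The natural fix is to not decompose every point of $D$ but to work with a countable dense subset $\{x^{(k)}\}\subseteq D$ (available by separability), set $x_{n,k}:=c_n(x^{(k)})$ suitably rescaled, and then observe that the set $S$ built from $\{x_{n,k}\}$ is closed enough (using solidity and completeness of $X$) to contain the closure of $\{x^{(k)}\}$, hence all of $D$; checking that $S$ absorbs the limit points requires a short approximation argument balancing how fast $\|x_{n,k}\|\to 0$ against the rate of convergence $x^{(k)}\to x$. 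The cone/solidity hypothesis on the members $A_n$ is exactly what makes both the "pull out $\lambda_n$" manipulations and the "$\lambda_n x_{n,k}\in A_n$" step legitimate, and I would flag at the outset that without it the statement fails.
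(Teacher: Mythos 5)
Your sufficiency direction is essentially the paper's argument and is correct: split any $s=\sum_n\lambda_nx_{n,k(n)}$ into the head $n<N$ and the tail $n\ge N$ where $\|x_{n,k}\|\le\varepsilon$, absorb the head into a single member of some $Q_M$ via $(GA2)$, $(GA3)$ and solidity, and conclude $\delta_M(D;Q)\le\varepsilon$. No issue there.

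The necessity direction, however, has two genuine gaps. First, the iteration step ``the remainder lies in a bounded set, and one covers it using $A_n$ at the next scale'' does not follow from boundedness: the remainders $x-a_1(x)$ no longer lie in $D$, and a merely bounded set need not satisfy $\delta_n\to 0$. You must prove that the remainder set is itself $Q$-compact before the next covering is available. This is precisely the step the paper isolates: it shows $D_1=(2D-B_1)\cap\tfrac12 U_X$ is $Q$-compact because $B_1$ is contained in a member $\tilde{A}_{n_1}\in Q_{n_1}$, so $2D-B_1\subset\varepsilon U_X+A_m+\tilde{A}_{n_1}\subset\tilde{\tilde{A}}_{m+n_1}+\varepsilon U_X$ by $(GA2)$ and $(GA3)$. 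Second, your proposed fix for countability --- decompose only a countable dense subset of $D$ and argue that the resulting hull $S$ is ``closed enough'' to absorb the limit points --- is not sound as sketched: the set $S=\{\sum_n\lambda_nx_{n,k(n)}:\sum_n|\lambda_n|\le1\}$ is in general not closed in $X$, and the theorem asserts the genuine containment $D\subset S$, not $D\subset\overline{S}$. The paper avoids both problems simultaneously by using the standing assumption that each member of $Q_n$ is \emph{separable}: it takes a countable dense subset $\{x_{m,k}\}_{k}$ of the approximating set $A_{n_m}$ itself, sets $B_m=(2D_{m-1}+2^{-m}U_X)\cap\{x_{m,k}\}_{k}$, and runs a greedy successive approximation giving, for \emph{every} $d\in D$, an exact expansion $d=\sum_{n}2^{-n}b_n$ with $b_n=x_{n,k(n)}\in B_n$ and $\|b_n\|\le 3\cdot 2^{-(n-2)}$; the coefficients $\lambda_n=2^{-n}$ then satisfy $\sum_n|\lambda_n|\le1$ exactly and no closure argument is needed. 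Without some such device your construction yields either uncountably many vectors at each level or a containment only up to closure, so the necessity direction as written does not close.
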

\begin{proof}
Let $D$ be $Q$-compact.  Then $\delta_n(2D,Q)\to 0$ and so there exists $n_1$ such that
\begin{equation}
2D \subset \frac{1}{4}U_X+A_{n_1}.
\end{equation}
Since $A_{n_1}$ is separable let $\{x_{1,k}\}_{k=0}^\infty$ be a countable dense subset of $A_{n_1}$; then it is easy to see that $B_1=(2D+\frac{1}{2}U_X)\cap\{x_{1,k}\}_{k=0}^{\infty} \neq \emptyset $ (and is an infinite countable set) and $2D\subset B_1+\frac{1}{2}U_X$.
\par
Let $D_1=(2D-B_1)\cap \frac{1}{2}U_X$, where $2D-B_1$ is the ordinary vector difference.  Then $D_1$ is a bounded set (since it is a subset of $\frac{1}{2}U$) and given $\epsilon>0$ we get, by the $Q$-compactness of $2D$, that $2D-B_1\subset \epsilon U_X+A_m+\tilde{A}_{n_1}\subset \tilde{\tilde{A}}_{m+n_1}+\epsilon U_X$ for suitable $m$ and suitable $\tilde{A}_{n_1}\in Q_{n_1}$, $\tilde{\tilde{A}}_{m+n_1} \in Q_{m+n_1}$; this is true because $B_1\subset \tilde{A}_{n_1}$ and $\lambda\tilde{A}_{n_1}\in Q_{n_1}$ for each $\lambda$.  This shows that $D_1$ is $Q$-compact and, as before, there exists $A_{n_2}\in Q_{n_2}$ such that $2D_1 \subset \frac{1}{8}U_X+A_{n_2}$. Let $\{x_{2,k}\}_{k=0}^{\infty}$ be a dense subset of $A_{n_2}$.  Then
\begin{gather}
B_2=(2D_1+\frac{1}{4}U_X)\cap \{x_{2,k}\}_{k=0}^{\infty} \text{ is infinite countable};\\
2D_1\subset B_2+\frac{1}{4}U_X;\\
D_2=(2D_1-B_2)\cap \frac{1}{4}U_X \text{ is }Q\text{-compact}.
\end{gather}
Continuing this process we define
\begin{equation}
B_m=\left(2D_{m-1}+\frac{1}{2^m}U_X\right)\cap\{x_{m,k}\}_{k=0}^{\infty},\ \{x_{m,k}\}_{k=0}^{\infty} \text{ dense in } A_{n_{m}};
\end{equation}
then $2D_{m-1} \subset B_m+\frac{1}{2^m}U_X$ and we define
\begin{equation}
D_m=(2D_{m-1}-B_m)\cap \frac{1}{2^m}U_X.
\end{equation}
Our construction gives for each $d \in D$, successively chosen $b_i \in B_i, i=1,2,\dots,k$ such that
\begin{equation}
d-\left(\frac{1}{2}b_1+\frac{1}{2^2}b_2+\dots+\frac{1}{2^k}b_k\right)\in 2^{-k}D_k,
\end{equation}
and since $D_k \subset 2^{-k}U_X$, it follows that
\begin{equation}
d=\sum_{n=1}^\infty \frac{1}{2^n}b_n.
\end{equation}
Since each $b_n=x_{n,k(n)}$ for a suitable $k(n)$ and since $$b_n \in B_n \subset 2D_{n-1}+\frac{1}{2^n}U_X \subset 2 \cdot \frac{1}{2^{n-1}}U_X+\frac{1}{2^n}U_X\subset \frac{3}{2^{n-2}}U_X,$$ it follows that $\|b_n\|\to0.$
\par
In the reverse direction, suppose we have that for each $n$ an $A_n \in Q_n$ and $\{x_{n,k}\}_{k=0}^{\infty} \subset A_n$ with $\|x_{n,k}\|\to 0$ as $n\to \infty$ uniformly in $k$ and
\begin{equation}
D\subset \left\{\sum_n\lambda_nx_{n,k(n)}:\sum_{n=0}^\infty |\lambda_n|\leq 1\text{ and } \{k(n)\}_{n=0}^{\infty}\subseteq \mathbb{N} \right\}:=C.
\end{equation}
Since for each $c \in C$ we can write
\begin{equation}
c=\sum_{n=1}^m\lambda_nx_{n,k(n)}+\sum_{n=m+1}^\infty \lambda_nx_{n,k(n)}=u+v,
\end{equation}
where $u \in \lambda_1A_1+\dots +\lambda_m A_m$, our assumption on $Q_n$ and solidness of the $A_n$'s give that $u \in \tilde{A}_{m^2}$. Furthermore,  given
$\epsilon>0$ we may choose $m$ such that $\|x_{n,k}\|<\epsilon$ for each $k>m$.  Thus $C \subset \epsilon U+\tilde{A}_{m^2}$ and so $\delta_n(C,Q)\to 0$ as $n\to\infty$, and therefore, also $\delta_n(D,Q)\to 0$.
\end{proof}
\begin{rem}
Theorem \ref{thm:qcompact} can be considered as an analogue of the Dieudonne-Schwartz lemma on compact sets in terms of standard Kolmogorov diameter.  If one chooses $Q_n$ to be the at-most-$n$-dimensional subspaces of $X$ one can show that $Q$-compactness of a bounded subset $D$ coincides with the usual definition of compactness of $D$
\end{rem}
\begin{rem}
The first author and M.Nakamura have proven a similar theorem for $p$-normed spaces, $0\leq p \leq 1$.
\end{rem}
Next we give a characterization of $Q$-compact subsets of $X$ via $Q$-compact maps into $X$.
\begin{theorem}
\label{qsub}
Assume $(X,Q_n)$ is a generalized approximation scheme on the Banach space $X$ with each $A_n \in Q_n$ being a vector subspace of $X$. Then, a bounded subset $D$ of $X$ is $Q$-compact if and only if $D \subset T(U_E)$ for a suitable Banach space $E$ and a $Q$-compact map $T$ on $E$ into $X$.
\end{theorem}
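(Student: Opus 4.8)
The plan is to establish the two implications separately. The reverse direction is immediate: if $D\subseteq T(U_E)$ for some Banach space $E$ and some $Q$-compact $T\in L(E,X)$, then whenever $T(U_E)\subseteq rU_X+A$ with $A\in Q_n(X)$ one also has $D\subseteq rU_X+A$, so $\delta_n(D;Q)\leq\delta_n(T(U_E);Q)=\delta_n(T;Q)\to 0$; since $D$ is bounded by hypothesis, $D$ is $Q$-compact.

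For the forward direction, suppose $D$ is $Q$-compact. Each $A_n$, being a vector subspace, is in particular solid, so Theorem~\ref{thm:qcompact} applies and yields an order-$c_0$-sequence $\{x_{n,k}\}_{n,k\in\mathbb{N}}$: there are $A_n\in Q_n$ with $\{x_{n,k}\}_k\subseteq A_n$, one has $\|x_{n,k}\|\to 0$ as $n\to\infty$ uniformly in $k$ (hence $M:=\sup_{n,k}\|x_{n,k}\|<\infty$), and
\[
D\subseteq C:=\Big\{\sum_{n=1}^\infty\lambda_n x_{n,k(n)}:\ \{k(n)\}_n\subseteq\mathbb{N},\ \sum_{n=1}^\infty|\lambda_n|\leq 1\Big\}.
\]
I would then take $E=\ell^1(\mathbb{N}\times\mathbb{N})$ with canonical unit vectors $e_{n,k}$ and define $T\in L(E,X)$ by $Te_{n,k}=x_{n,k}$; since $\sum_{n,k}|a_{n,k}|\,\|x_{n,k}\|\leq M\sum_{n,k}|a_{n,k}|$, the rule $T(\sum_{n,k}a_{n,k}e_{n,k})=\sum_{n,k}a_{n,k}x_{n,k}$ does define a bounded operator with $\|T\|\leq M$. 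For $c=\sum_n\lambda_n x_{n,k(n)}\in C$ the vector $\xi=\sum_n\lambda_n e_{n,k(n)}$ lies in $U_E$ and satisfies $T\xi=c$, so $C\subseteq T(U_E)$ and therefore $D\subseteq T(U_E)$.

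It remains to check that $T$ is $Q$-compact, which is the core of the argument. Fix $N$ and split any $\xi=\sum_{n,k}a_{n,k}e_{n,k}\in U_E$ as $T\xi=u_N+v_N$, where $u_N=\sum_{n\leq N}\sum_k a_{n,k}x_{n,k}$ and $v_N=\sum_{n>N}\sum_k a_{n,k}x_{n,k}$. Then $\|v_N\|\leq\varepsilon_N:=\sup_{n>N,\,k}\|x_{n,k}\|$, and $\varepsilon_N\to 0$ by the order-$c_0$ property. For each $n\leq N$ the partial sums of $\sum_k a_{n,k}x_{n,k}$ lie in $A_n$ (finite linear combinations in a subspace), so the full sum lies in $\overline{A_n}$, and since $\overline{A_n}\subseteq A_n+\epsilon U_X$ for every $\epsilon>0$ we get $u_N\in(A_1+\cdots+A_N)+N\epsilon U_X$. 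Iterating $(GA3)$ gives $A_1+\cdots+A_N\in Q_{N(N+1)/2}$, hence $T(U_E)\subseteq(\varepsilon_N+N\epsilon)U_X+B$ for a suitable $B\in Q_{N(N+1)/2}$; letting $\epsilon\to 0$ shows $\delta_{N(N+1)/2}(T;Q)\leq\varepsilon_N$, and since $\{\delta_m(T;Q)\}$ is non-increasing and $\varepsilon_N\to 0$ we conclude $\delta_m(T;Q)\to 0$. The main obstacles I anticipate are verifying the hypotheses needed to invoke Theorem~\ref{thm:qcompact} and keeping track of the index inflation from $N$ to $N(N+1)/2$ together with the passage to the closures $\overline{A_n}$, which is necessary because the subspaces $A_n$ are not assumed closed.
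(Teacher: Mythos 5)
Your route is genuinely different from the paper's. The paper does not pass through Theorem~\ref{thm:qcompact} at all: it takes $C$ to be the closed absolutely convex hull of $D$, checks that $C$ is still $Q$-compact (because $\epsilon U_X+A_n$ is absolutely convex when $A_n$ is a subspace), and then sets $E=(X_C,\mu_C)$, the linear span of $C$ normed by the Minkowski gauge of $C$, with $T$ the canonical injection; then $T(U_E)=C\supseteq D$ and $T$ is $Q$-compact simply because its unit-ball image is $C$. That gauge construction is shorter, needs no index bookkeeping, and does not use the separability of the sets $A_n\in Q_n$ that Theorem~\ref{thm:qcompact} requires (a standing assumption of this subsection that your argument silently inherits). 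What your approach buys in exchange is an explicit factorization through $\ell^1$, in the spirit of the classical ``compact sets sit inside the closed convex hull of a null sequence'' lemma; your reverse implication and the $Q$-compactness estimate $\delta_{N(N+1)/2}(T;Q)\leq\varepsilon_N$ via the splitting $T\xi=u_N+v_N$, the passage to $\overline{A_n}\subseteq A_n+\epsilon U_X$, and the iteration of $(GA3)$ are all correct.

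There is, however, one concrete gap: you need $\|T\|=\sup_{n,k}\|x_{n,k}\|<\infty$ for $T$ to be a bounded operator on $\ell^1(\mathbb{N}\times\mathbb{N})$ (and indeed for the sums defining $u_N$ to converge), but this does not follow from the definition of an order-$c_0$-sequence. Uniform convergence $\|x_{n,k}\|\to 0$ as $n\to\infty$ only controls the rows with $n$ large; for each of the finitely many small $n$ the family $\{x_{n,k}\}_{k}$ ranges over an infinite set inside the (unbounded) subspace $A_n$ and could a priori be unbounded, in which case $C$ itself is unbounded and cannot be covered by $T(U_E)$ for any bounded $T$. The fix is easy but must be said: either observe that the sequence constructed in the proof of Theorem~\ref{thm:qcompact} actually satisfies $\|x_{n,k}\|\leq 3\cdot 2^{2-n}$ for all $k$ (so one may assume the order-$c_0$-sequence is bounded), or state and use that quantitative form of the theorem. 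With that amendment your proof is complete.
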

\begin{proof}
We need only prove the ``only if'' part.  Let $D$ be $Q$-compact and let $C$ denote the closed absolute convex hull of $D$.  Then that $C$ is $Q$-compact is easily seen as follows: each $c \in C$ is of the form $\displaystyle c=\sum_{i=1}^m\lambda_id_i$ with $\displaystyle \sum_{i=1}^m|\lambda_i|\leq 1$ and $d_i \in D$ for each $i$; give $\epsilon>0$, there exists $N$ such that for all $n\geq N,\delta_n(D,Q)<\epsilon$ and equivalently $D \subset \epsilon U_X +A_n$ and obviously then $C \subset \epsilon U_X+A_n$.
\par
Let $X_C$ denote the linear subspace of $X$ spanned by the elements of $C$ endowed with the norm given by the gauge ( = Minkowski functional) $\mu$ of $C$.  Then $(X_C,\mu_C)$ is a Banach space (see, e.g., \cite{rolewicz}, \cite{rudin}).  Let $E=(X_C,\mu_C)$.  If $T$ is the canonical injection of $X_C$ into $X$, then $T(U_E)=C \supset D$ and $T$ is $Q$-compact.
\end{proof}
\begin{rem}
Using order $c_0$-sequences and associated sets $S_m= \{\displaystyle \sum_{n=1}^{m} \lambda_nx_n,k(n) :\,\,\, \displaystyle \sum_{n=1}^{m} |\lambda_n| \leq 1\}$,one can define the ball measure of non-$Q$-compactness $\gamma(D)$ of a bounded set $D$ in a Banach space $X$ as $ \gamma (D,Q)= \inf\{r>0:  D \subset \bigcup_{x\in S_n} B(x,r)\}$. It is shown in \cite{Ak0} that $$\gamma (D,Q)= \displaystyle \lim_{n} \delta(D, Q).$$ Furthermore, if we denote by $Q_c$ the ideal of $Q$-compact maps, then the ideal variation $\gamma_{Q_c} (D)= \inf\{r>0:\,\,\exists E\,\,\mbox{and}\,\,\, T\in Q_c(E,X)\, \,\mbox{such that}\,\, D \subset T(U_E)+rU_X\} = \gamma(D)$.
\end{rem}

\bigskip

\footnotesize{A. G. Aksoy

Department of Mathematics. Claremont McKenna College.

Claremont, CA, 91711, USA.

email:  aaksoy@cmc.edu}

\bigskip

\footnotesize{J. M. Almira

Departamento de Matem\'{a}ticas. Universidad de Ja\'{e}n.

E.P.S. Linares,  C/Alfonso X el Sabio, 28

23700 Linares (Ja\'{e}n) Spain

email: jmalmira@ujaen.es}

\bigskip

\end{document}